\documentclass{amsart}

\makeatletter
\providecommand{\@secnumpunct}{. }
\makeatother

\usepackage{graphicx} 
\usepackage{scrextend}
\usepackage{subcaption}
\usepackage{float}
\usepackage[utf8]{inputenc}
\usepackage{mathtools}
\usepackage{placeins}
\usepackage{lipsum}
\usepackage{verbatim}
\usepackage{tikz-cd, tikz}
\usepackage{amsmath}
\usepackage{amsfonts}
\usepackage{nccmath}
\usepackage{amsthm}
\usepackage{amssymb}
\usepackage{setspace}
\usepackage{pifont}
\usepackage{array}
\usepackage{enumitem}
\usepackage[english]{babel}
\usepackage[all]{xy}
\usepackage[autostyle]{csquotes}
\usepackage{aliascnt}    
\usepackage[
  colorlinks=true,
  urlcolor=black,
  linkcolor=blue,
  citecolor=blue, 
  linktocpage=true
]{hyperref}
\usepackage{geometry}
\usepackage[labelformat=simple]{caption}

\usetikzlibrary{plotmarks, positioning, fit}
\usetikzlibrary{calc}

\numberwithin{thmcounter}{section}     
\newaliascnt{thmauto}{thmcounter}

\newaliascnt{Defauto}{thmcounter}

\newaliascnt{lemauto}{thmcounter}

\newaliascnt{propauto}{thmcounter}

\newaliascnt{corauto}{thmcounter}

\newaliascnt{remauto}{thmcounter}

\newaliascnt{notauto}{thmcounter}
\newaliascnt{conjauto}{thmcounter}

\newaliascnt{conauto}{thmcounter}

\newaliascnt{obsauto}{thmcounter}

\newaliascnt{exauto}{thmcounter}

\addto\extrasenglish{}
\addto\extrasenglish{}
\addto\extrasenglish{}

\newcommand{\suchthat}{\ifnum\currentgrouptype=16\middle\fi|}

\makeatletter
\newcommand{\oset}[2]{%
  {\mathop{#2}\limits^{\vbox to 15\ex@{\kern-\tw@\ex@
   \hbox{\scriptsize #1}\vss}}}}
\makeatother

\newtheorem*{theorem*}{Theorem}

\newtheorem{atheorem}{Theorem}

\newtheorem{theorem}[thmauto]{Theorem}
\newtheorem{lemma}[lemauto]{Lemma}
\newtheorem{proposition}[propauto]{Proposition}
\newtheorem{corollary}[corauto]{Corollary}
\newtheorem{conjecture}[conjauto]{Conjecture}

\theoremstyle{definition}
\newtheorem{definition}[Defauto]{Definition}

\theoremstyle{remark}
\newtheorem{remark}[remauto]{Remark}

\DeclareMathOperator{\SL}{SL}
\DeclareMathOperator{\GL}{GL}

\DeclareMathOperator{\C}{C}

\DeclareMathOperator{\St}{St}
\DeclareMathOperator{\redhom}{\widetilde{H}}
\DeclareMathOperator{\rk}{rank}

\DeclareMathOperator{\homology}{H}

\DeclareMathOperator{\stab}{Stab}

\DeclareMathOperator{\Img}{im}

\providecommand{\longrightarrow}{\relbar\joinrel\rightarrow}

\providecommand{\F}{\mathbb{F}}
\providecommand{\Z}{\mathbb Z}
\providecommand{\Q}{\mathbb Q}
\providecommand{\T}{\mathcal T}

\newcounter{cases}
\newcounter{subcases}[cases]
\newenvironment{mycases}
  {%
    \setcounter{cases}{0}%
    \setcounter{subcases}{0}%
    \def\case
      {%
        \par\noindent
        \refstepcounter{cases}%
        \textbf{Case \thecases.}
      }%
  }
  {%
  }
\renewcommand*\thecases{\arabic{cases}}

\numberwithin{equation}{section}

\usepackage{titlesec}
\titlespacing*{\section}
{0pt}{2ex plus 1ex minus .2ex}{1ex plus .2ex}
\titlespacing*{\subsection}
{0pt}{2ex plus 1ex minus .2ex}{1ex plus .2ex}
\titlespacing*{\subsubsection}
{0pt}{2ex plus 1ex minus .2ex}{1ex plus .2ex}
\titleformat{\paragraph}[hang]{\normalfont\normalsize\bfseries}{\theparagraph}{1em}{}[]
\titlespacing*{\paragraph}{0pt}{1em}{1em}

\title{Codimensions one and two cohomology of Hecke congruence subgroups}
\thanks{
Jeremy Miller was partially supported by National Science Foundation (NSF) grants DMS-2202943 and DMS2504473 as well as a Simons Foundation Travel Support for Mathematicians grant. Tatiana Abdelnaim was supported by National Science Foundation grant DMS-2405310 as well as a Simons Foundation Travel Support for Mathematicians grant.}
\author{Tatiana Abdelnaim and Jeremy Miller}
\date{}

\begin{document}
 
\begin{abstract} For $n\geq 1$ and $p$ a prime, the Hecke congruence subgroup $\Gamma_{0,n}(p)\leq \SL_n(\Z)$ is the subgroup of matrices whose first column is of the form $(*,0,\dots,0)^t\bmod p$. Borel--Serre showed that $\Gamma_{0,n}(p)$ has virtual cohomological dimension $\binom{n}{2}$. The first author proved that the rational cohomology in this top degree $\binom{n}{2}$ vanishes for $n$ sufficiently large compared to $p$. We prove analogous results in codimension $1$ and $2$. 
\end{abstract}
\maketitle

\section{Introduction}

Let $n\geq 1$ and $p$ a prime. Recall that the \emph{Hecke congruence subgroup} of $\SL_n(\Z)$ is defined as follows: 
\begin{align*}\Gamma_{0,n}(p)&=\left\{g\in\SL_n(\Z)~\middle\vert\ g\equiv \begin{pmatrix*}
* & * & \dots& * \\
0 & * &\dots&*\\
\vdots &\vdots & \ddots&\vdots \\
0 & * & \dots & *
\end{pmatrix*}\bmod p \right\}\\
&=\left\{g\in\SL_n(\Z)~\middle\vert\ g\cdot e_1\equiv \lambda e_1\bmod p~\text{for some $\lambda\in\F_p^\times$}\right\}.\end{align*}
The cohomology of Hecke congruence subgroups are closely related to modular forms for $n=2$ and automorphic forms more generally.

In this paper, we investigate the cohomology in large degrees. Before specializing to Hecke congruence subgroups, we review what is known more generally about the high degree rational cohomology groups of finite index subgroups of $\SL_n(\Z)$.

It follows from work of Borel--Serre \cite{BS73} that, for $\Gamma \leq \SL_n(\Z)$ finite index, $$\homology^{i}\left(\Gamma;\Q\right) \cong 0 \text{ for } i > \binom{n}{2}.$$ For each finite index subgroup $\Gamma$, it is natural to ask what is the largest degree in which $\Gamma$ has nontrivial rational cohomology. The general intuition is that for $\Gamma$ ``large'', one should expect a larger vanishing range than what Borel--Serre prove and for $\Gamma$ ``small,'' Borel--Serre's vanishing result should be sharp. We refer to $\homology^{\binom{n}{2} - i}\left(\Gamma;\Q\right)$ as the codimension-$i$ cohomology of $\Gamma$. At one extreme, we have the Church--Farb--Putman conjecture \cite{CFPconj} which predicts that the codimension-$i$ rational cohomology of $\SL_n(\Z)$ vanishes for all $n$ sufficiently large.

\begin{conjecture}[Church--Farb--Putman]
\noindent  The codimension-$i$ cohomology of $\SL_n(\Z)$ vanishes for $n \geq i+2$.
\end{conjecture}

In other words, they conjectured that $\homology^{i}\left(\SL_n(\Z);\Q\right) \cong 0$ for $i \geq \binom{n}{2} - n+2$. This has been verified for $i=0$ by work of Lee--Szczarba \cite{LS}, $i=1$ by work of Church--Putman \cite{CP}, and $i=2$ by work of Brück, Patzt, Sroka, Wilson and the second author \cite{BMPSW}. 

In contrast, if a finite index subgroup $\Gamma \leq \SL_n(\Z)$ is contained in a principal congruence subgroup, then $$\homology^{\binom{n}{2}}\left(\Gamma;\Q\right) \neq 0$$ by work of Paraschivescu \cite{Paraschivescu}.

The first author \cite[Theorem A]{Abdelnaim} proved that the codimension zero cohomology of $\Gamma_{0,n}(p)$ vanishes for $n\geq \frac{p+14}{6}$. We extend this to codimensions $1$ and $2$.

\begin{atheorem}\label{codim1}
    Let $p$ be a prime. Then
    $\homology^{\binom{n}{2}-1}\left(\Gamma_{0,n}(p);\Q\right) \cong 0$ for $n\geq 4p+5$. 
\end{atheorem}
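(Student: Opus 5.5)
By Borel--Serre duality, $\homology^{\binom{n}{2}-i}(\Gamma;\Q)\cong \homology_i(\Gamma;\St_n(\Q)\otimes\Q)$ for any finite index $\Gamma\le\SL_n(\Z)$. So the statement is equivalent to the vanishing of $\homology_1(\Gamma_{0,n}(p);\St_n(\Q)\otimes \Q)$ for $n\ge 4p+5$. Since $\Gamma_{0,n}(p)$ is the stabilizer in $\SL_n(\Z)$ of the line $\langle e_1\rangle\subset\F_p^n$, Shapiro's lemma identifies this group with
\[
\homology_1\bigl(\SL_n(\Z);\St_n(\Q)\otimes\Q[\mathbb{P}^{n-1}(\F_p)]\bigr).
\]
The plan is to run the Church--Putman / Brück--Miller--Patzt--Sroka--Wilson strategy with this twisted coefficient module. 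We resolve $\St_n$ by the complex of partial bases (or augmented partial bases, as in \cite{CP, BMPSW}). This gives a spectral sequence whose $E^1$ terms are homology groups of stabilizers of partial (augmented) frames with coefficients in $\Q[\mathbb{P}^{n-1}(\F_p)]$, converging to zero in a range.

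The key input in codimension $0$ is the presentation of $\St_n(\Q)\otimes\Q$ by integral apartment classes (Ash--Rudolph), together with Bykovski\u{\i}'s presentation. In \cite{Abdelnaim} these were used to show that the coinvariants $(\St_n\otimes\Q[\mathbb{P}^{n-1}(\F_p)])_{\SL_n(\Z)}$ vanish when $n$ is large compared to $p$. The mechanism is a pigeonhole argument: an integral basis $v_1,\dots,v_n$ reduced mod $p$ has many vectors whose reductions lie in few lines, so there is enough room to apply a transvection that fixes the chosen line mod $p$ and kills the class.

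For codimension $1$ I would imitate Church--Putman. The $E^1$ page in the relevant total degree involves $\homology_0$ of stabilizers of a single vector $v$ and of pairs, with coefficients $\St$ of the complementary lattice tensored with the permutation module on lines. Decompose the permutation module into orbits of the stabilizer. For $\stab(v)$, the orbits of lines $L\subset\F_p^n$ are $L=\langle\bar v\rangle$, $L$ in a complement, and so on. Each orbit is again a Hecke-type coefficient system for a smaller $\SL_{n-1}(\Z)$ or $\SL_{n-2}(\Z)$ parabolic, possibly with a $\Z^{n-1}$ unipotent part. The terms then reduce, via Shapiro, to codimension-$0$ vanishing for $\Gamma_{0,m}(p)$ or for principal-type pieces with $m$ around $n-1$ or $n-2$.

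The main obstacle is that some orbits do not reduce to Hecke coefficients: for example, lines $L$ not containing $\bar v$ but meeting $\langle \bar v\rangle+$ a complement nontrivially. These give coefficient modules like $\Q[\F_p^{m}]$ (affine points mod $p$), for which vanishing of coinvariants must be proven directly. Here I would reprove an Ash--Rudolph/pigeonhole argument, requiring roughly $m\gtrsim 4p$ basis vectors so that among any basis one finds enough vectors in a common coset mod $p$ to perform the cancelling moves. This is where the bound $n\ge 4p+5$ should come from: roughly $p$ from each of several pigeonhole steps, plus a small constant lost to passing to stabilizers. Finally, the differential $d^1$ into the $E^1_{0,1}$-type term must be checked surjective, that is, every class is supported on stabilizers of small frames. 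This uses the high connectivity of the augmented partial bases complex, exactly as in \cite{CP}.
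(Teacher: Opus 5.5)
Your opening reduction is correct: Borel--Serre duality, then Shapiro's lemma to rewrite the target as $\homology_1(\SL_n(\Z);\St_n\otimes\Q[\mathbb{P}^{n-1}(\F_p)])$. After that, though, the proposal is a program rather than a proof, and its spectral sequence is never pinned down. You first describe the $E^1$ terms as homology of stabilizers of (augmented) partial frames with coefficients in $\Q[\mathbb{P}^{n-1}(\F_p)]$. You then describe them as $\homology_0$ of stabilizers of a vector with coefficients ``$\St$ of the complementary lattice''. These are different constructions. The second resembles the setup the paper actually uses: the Miller--Nagpal--Patzt complex $C_*(n)$, with $C_k(n)=\bigoplus\St(A_0)\otimes\cdots\otimes\St(A_k)$ over ordered decompositions $A_0\oplus\cdots\oplus A_k=\Z^n$. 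Its acyclicity in degrees $\le 2$ for $n\geq 4$ is a separate deep input \cite[Theorem D']{MPP}, not something obtained ``exactly as in \cite{CP}'' from connectivity of a partial-bases complex. The orbit-by-orbit reduction of twisted coinvariants to Hecke-type ones is likewise only asserted. If some orbit genuinely produced a piece of principal-congruence type, codimension-$0$ vanishing would be false there (Paraschivescu), so it could not serve as an input.

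More importantly, the mechanism that actually kills $\homology_1$ is missing. Consider the paper's setup, where $E^1_{01}=\homology_1(G_n^p;\St_n(\Z))$ and $E^\infty_{01}=0$. Acyclicity alone does not make $d^1\colon E^1_{11}\to E^1_{01}$ surjective: one must also rule out $d^2\colon E^2_{20}\to E^2_{01}$. The paper does this by showing $E^1_{20}(n)=0$ for $n\geq 3p+4$. That uses the fact that $\prod_i G^p_{A_i}(v_i)$ has finite index in the stabilizer of a decomposition (which is why one passes to the vector stabilizer $G_n^p$), then K\"unneth, then vanishing of coinvariants in rank $\geq p+2$. Surjectivity is also not the end. $E^1_{11}$ is built from terms $\homology_0(G^p_{A_0}(v_0);\St(A_0))\otimes\homology_1(G^p_{A_1}(v_1);\St(A_1))$, i.e.\ the same unknown $\homology_1$ in smaller rank. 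The idea you lack is the induction of Lemma~\ref{surj'}: $\homology_1(G_n^p;\St_n(\Z))$ is generated by products whose single $\homology_1$ factor lives on a summand of rank at most $3p+3$. After reordering factors (Lemma~\ref{lem:img}) and grouping the $\homology_0$ factors, for $n\geq 4p+5$ the complementary summand has rank at least $p+2$, so its $\homology_0$ factor vanishes. That is exactly where $4p+5=(3p+3)+(p+2)$ comes from; ``roughly $p$ from each of several pigeonhole steps'' is not a derivation. Your last step (classes supported on stabilizers of small frames) stops just before the point where the complementary coinvariants must be shown to vanish.
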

\begin{atheorem}\label{codim2}
   Let $p$ be a prime. Then
    $\homology^{\binom{n}{2}-2}\left(\Gamma_{0,n}(p);\Q\right) \cong 0$ for $n\geq 7p+8$. 
\end{atheorem}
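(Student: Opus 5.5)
Using Borel--Serre duality, the codimension-$2$ cohomology becomes homology of the Steinberg module: $\homology^{\binom{n}{2}-2}\left(\Gamma_{0,n}(p);\Q\right)\cong \homology_2\left(\Gamma_{0,n}(p);\St_n(\Q)\otimes\Q\right)$. By Shapiro's lemma this equals $\homology_2\left(\SL_n(\Z);\St_n(\Q)\otimes \Q[\SL_n(\Z)/\Gamma_{0,n}(p)]\right)$. Since $\SL_n(\Z)$ acts transitively on the lines of $\F_p^n$, with stabilizer of the line spanned by $e_1$ exactly $\Gamma_{0,n}(p)$, the coefficient module is $\Q[\mathbb{P}^{n-1}(\F_p)]$, the permutation module on lines of $\F_p^n$.

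\textbf{Step 1: a resolution of $\St_n$.} I would take the partial resolution used for $\SL_n(\Z)$ in \cite{CP} and \cite{BMPSW}, built from apartment classes and augmented partial bases:
\[
\cdots\to B_2\to B_1\to B_0\to \St_n(\Q)\otimes\Q\to 0 .
\]
Here $B_0$ is induced from the stabilizer of a frame, $B_1$ from the stabilizers of augmented frames $(v_0,v_1,\dots,v_n)$ with $v_0=\pm v_1\pm v_2$, and $B_2$ from the stabilizers of doubly augmented frames. Tensoring with $\Q[\mathbb{P}^{n-1}(\F_p)]$ gives a spectral sequence with $E^1_{s,t}=\homology_t\left(\SL_n(\Z);B_s\otimes\Q[\mathbb{P}^{n-1}(\F_p)]\right)$. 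To get $\homology_2=0$ it suffices to show that $E^1_{s,t}$ vanishes for $s+t=2$ with $t>0$, and that $E^2_{2,0}=0$.

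\textbf{Step 2: the terms with $t>0$.} Each $B_s$ is a sum of modules induced from stabilizers of (augmented) frames; up to finite groups these stabilizers are tori times signed permutation groups. The terms then reduce to coinvariants of the form $\left(\Q[\text{lines}]\otimes\text{orientation}\right)_{\mathrm{Stab}}$. A line $\ell\subset\F_p^n$ is determined mod $p$ by its coordinates in the frame. When $n$ is large compared to $p$ (roughly $n>kp$), pigeonhole gives two frame vectors $v_i,v_j$ whose coordinates in $\ell$ agree up to a scalar. Then a transposition or sign change fixes $\ell$ but acts by $-1$ on the orientation character, which kills the class rationally. Equal coordinates are forced once $n$ exceeds the number of values $\F_p$ (up to scaling) can take on frame vectors, which is about $p+1$ classes. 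Using repeated such pairs handles $t=1,2$ and the augmented variants. Each extra augmentation uses up a few more vectors, which is how the bounds grow from $4p+5$ for Theorem~\ref{codim1} to $7p+8$ here.

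\textbf{Step 3 (the main obstacle).} The hard part is showing that the bottom differential $E^1_{3,0}\to E^1_{2,0}\to E^1_{1,0}$ is exact at $E^1_{2,0}$. This is the step that requires an explicit presentation-level argument, as in \cite{BMPSW}, for the coinvariants $\left(\St_n\otimes\Q[\mathbb{P}^{n-1}(\F_p)]\right)$. I would first reduce each generator to a normal form in which the line $\ell$ has many repeated coordinates, using that abundance of repetitions. Then I would mimic the Brück--Miller--Patzt--Sroka--Wilson argument while keeping track of $\ell$ throughout, and check that every relation they use can be chosen to preserve a pair of equal $\ell$-coordinates. This bookkeeping is where the linear bound $n\geq 7p+8$ gets consumed, and it is the step I expect to require the most care.
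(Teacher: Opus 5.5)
\textbf{There is a genuine gap: your Step 3 carries the whole theorem and is only announced, not proved.} Your reduction is fine: Borel--Serre duality, then Shapiro's lemma with coefficients $\Q[\mathbb{P}^{n-1}(\F_p)]$, using that $\SL_n(\Z)\to\SL_n(\F_p)$ is onto. The difficulty is Step 3, exactness at $E^1_{2,0}$ of a frame resolution twisted by the permutation module on lines. You state it as a plan (``mimic BMPSW and check that every relation can be chosen to preserve a pair of equal $\ell$-coordinates'') and give no argument. Nothing in the proposal produces the bound $7p+8$ either. This is not bookkeeping. Once $\ell$ is added, the orbits become pairs consisting of a frame and the mod $p$ coordinate vector of $\ell$ up to signed permutations and scaling. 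The complex you would have to analyze is therefore much larger than the one in BMPSW, and you give no reason their cancellations survive.

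Step 2, meanwhile, targets terms that vanish for free. Stabilizers in $\SL_n(\Z)$ of frames and augmented frames are finite signed permutation groups; the integer points of a split torus are just $\{\pm1\}^n$. So $E^1_{s,t}=0$ for $t>0$ over $\Q$ with no pigeonhole needed, and all the content sits in the row $t=0$.

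\textbf{The pigeonhole idea cannot kill that bottom row generator by generator, so it cannot bypass Step 3.} Equal coordinates do not give a sign-reversing element of $\SL_n(\Z)$. A transposition has determinant $-1$ and must be paired with a sign change, which fixes $\ell$ only at a zero coordinate (or via $-I$ when $n$ is odd). Concretely, take $p$ odd, $n$ even, a frame $v_1,\dots,v_n$, and $\ell=\langle v_1+\dots+v_n\rangle$. A signed permutation $v_i\mapsto\epsilon_i v_{\sigma(i)}$ preserving $\ell$ mod $p$ must have all $\epsilon_i=\lambda=\pm1$, so its determinant is $\operatorname{sgn}(\sigma)$. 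Hence $\sigma$ is even, the orientation character is trivial on the stabilizer, and this generator survives in $E^1_{0,0}$. The paper's Proposition~\ref{coinv K=0} handles exactly this situation (Cases 2 and 3) by first applying a Bykovski relation to create a zero coordinate, that is, by using the differential.

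\textbf{How the paper avoids the problem.} The paper never needs a twisted version of the BMPSW exactness argument. It works with $G_n^p$ and the complex $C_*(n)$, whose exactness in degrees $\le 2$ is imported from Theorem~\ref{exact}. The terms of $C_*(n)$ are induced from stabilizers of decompositions, and these contain $\prod_i G^p_{A_i}(v_i)$ with finite index, so K\"unneth allows induction on rank. This gives $E^1_{k0}=0$ from the coinvariant vanishing in rank $\ge p+2$, and $E^1_{k1}=0$ from the codimension-one theorem. Hence $d^1\colon E^1_{12}\to E^1_{02}$ is onto for $n\ge 6p+7$. Lemmas~\ref{surj:B} and \ref{lem:img} then push every class of $\homology_2(G_n^p;\St_n(\Z))$ onto a product with an $\homology_0$ factor of rank $\ge n-(6p+6)\ge p+2$, which vanishes. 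That is where $7p+8$ comes from, and your proposal has no substitute for this mechanism.
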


The first author \cite[Theorem B]{Abdelnaim} proved that   $\homology^{\binom{n}{2}}\left(\Gamma_{0,n}(p);\Q\right)$ does not vanish for small $n$ and $p$. Thus, it seems unlikely that there is a range independent of $p$ for which codimension-$1$ and codimension-$2$ rational cohomology of $\Gamma_{0,n}(p)$ vanishes. On the other hand, we do not know if the above stable ranges are sharp.

\subsection*{Proof strategy.}
Our main technical tool is Borel–Serre duality \cite{BS73}. Borel–Serre proved that if $\Gamma$ is a finite index subgroup of $\SL_n(\Z)$, then \[\homology^{\binom{n}{2}-i}\left(\Gamma;\Q\right)\cong \homology_i\left(\Gamma;\St_n(\Z)\right).\] Here $\St_n(\Z)$ is the Steinberg module (\autoref{def:St}). It is defined as the top homology of the Tits building of $\Z^n$, the simplicial complex whose simplices correspond to flags of proper nonzero  summands. This translates \autoref{codim1} and \autoref{codim2} to questions concerning $\homology_i\left(\Gamma_{0,n}(p);\St_n(\Z)\right)$ for $i=1$ and $2$ respectively. We show vanishing of these low-degree homology groups using a resolution established by Patzt, Putman, and the second author \cite[Theorem D']{MPP} and a spectral sequence argument similar to one used by Nagpal, Patzt, and the second author \cite[Proof of Theorem 7.1]{MNP}.

\subsection*{Acknowledgments.}
We thank Peter Patzt for helpful conversations and comments on an earlier version of this paper.

\section{Construction of a spectral sequence}
We begin by recalling the Steinberg module and a chain complex due to Nagpal, Patzt, and the second author \cite{MNP}. 

\begin{definition}
    Let $A$ be a free abelian group of rank $n$. Define the \emph{Tits building} $\T(A)$ to be the simplicial complex whose vertices are proper nonzero summands of $A$, and whose $k$-simplices are given by flags \[0\subsetneq V_0\subsetneq\cdots \subsetneq V_k\subsetneq A.\] 
\end{definition}
\begin{theorem}[{\cite{solomon-tits}}]
    Let $A$ be a free abelian group of rank $n$. $\T(A)$ is homotopy equivalent to a wedge of $(n-2)$-spheres. 
\end{theorem}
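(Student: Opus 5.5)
The statement is the Solomon--Tits theorem for $\T(A)$. Since $A$ is free of rank $n$, I would first reduce to a rational vector space. Set $V = A\otimes \Q$. The map $W\mapsto W\otimes\Q$ is a bijection from proper nonzero summands of $A$ to proper nonzero subspaces of $V$. Its inverse is $U\mapsto U\cap A$, which is a summand because $A/(U\cap A)$ embeds in $V/U$ and so is torsion-free. Both maps preserve inclusions. Hence $\T(A)$ is isomorphic to the order complex of the poset of proper nonzero subspaces of the $n$-dimensional vector space $V$ over $\Q$. It therefore suffices to prove that this vector-space building $\T(V)$ is homotopy equivalent to a wedge of $(n-2)$-spheres.

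I would argue by induction on $n$. For $n=1$ the complex is empty, which is the $(-1)$-sphere convention, and for $n=2$ it is a discrete set of points, i.e.\ a wedge of $0$-spheres. Since $\T(V)$ has dimension $n-2$, it is enough to show that it is $(n-3)$-connected: a CW complex of dimension $d$ that is $(d-1)$-connected is homotopy equivalent to a wedge of $d$-spheres, by Hurewicz and Whitehead, because its top homology is free.

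For connectivity, fix a line $L\subset V$. Let $X_L$ be the subcomplex spanned by subspaces $W$ such that either $L\subseteq W$ or $L+W\neq V$. It is a cone: on $X_L$ the poset map $W\mapsto W+L$ is well defined. It compares to the identity via $W\le W+L$ and to the constant map $L$ via $L\le W+L$, which gives the contraction by Quillen's fiber lemma or homotopy of poset maps. The simplices missing from $X_L$ involve hyperplanes $H$ with $L\not\subseteq H$. These hyperplanes are pairwise incomparable, so they can be added one at a time. Each one is attached along its link in $X_L$, namely $\T(H)\ast(\text{empty upper part})$, which is just $\T(H)$. By induction $\T(H)$ is a wedge of $(n-3)$-spheres. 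Gluing cones over $(n-4)$-connected complexes to a contractible space yields a complex homotopy equivalent to a wedge of suspensions of $\T(H)$, hence a wedge of $(n-2)$-spheres.

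The main obstacle is verifying that the link of each added hyperplane $H$ inside $X_L$ is all of $\T(H)$. One checks that every proper nonzero $W\subsetneq H$ satisfies $W+L\subseteq H+L$, and then needs $W+L\neq V$. This holds because $\dim(W+L)\le \dim W+1\le n-1$. So every such $W$ lies in $X_L$ and the link is exactly $\T(H)$. With that verified, the induction closes.
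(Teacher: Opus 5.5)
Your argument is correct. The paper gives no proof of this statement: it quotes Solomon--Tits and uses it only to define $\St(A)$. So your proposal is an independent route rather than a reconstruction of the paper's.

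The standard proof behind the citation is building-theoretic. One uses the apartment structure, for instance retractions onto apartments, to show the building is a wedge of spheres indexed by the chambers opposite a fixed chamber. The corresponding apartment classes then form a basis of $\redhom_{n-2}$.

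Your route differs in two ways.
\begin{enumerate}
\item You make explicit the identification $\T(A)\cong\T(A\otimes\Q)$ via saturation. The paper leaves this implicit when it applies a vector-space theorem to a free abelian group.
\item You then run Quillen's inductive argument. $X_L$ is contractible by the poset homotopies $\mathrm{id}\le (W\mapsto W+L)\ge \mathrm{const}_L$. The missing vertices are exactly the pairwise non-adjacent hyperplanes $H\not\supseteq L$, and each has link exactly $\T(H)$. Hence $\T(V)\simeq \T(V)/X_L\simeq \bigvee_H \Sigma\T(H)$.
\end{enumerate}

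What each approach buys: yours is more elementary, uses only poset homotopy, and works verbatim over any field. The recursion also gives the rank $q^{\binom{n}{2}}$ over $\F_q$, since exactly $q^{n-1}$ hyperplanes miss a fixed line. It does not directly deliver the apartment basis, which is the form of the Steinberg module the paper actually works with. The paper imports that form from Bykovskii's presentation anyway.

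Two small points of presentation.
\begin{itemize}
\item $X_L$ is not literally a simplicial cone with apex $L$: a subspace $W$ incomparable with $L$ is not adjacent to $L$. It is only contractible, via the poset homotopy you describe.
\item Your preliminary reduction to $(n-3)$-connectivity via Hurewicz and Whitehead is unnecessary. The gluing step already identifies the homotopy type directly.
\end{itemize}
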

 \begin{definition}\label{def:St}
     The \emph{Steinberg module} $\St(A)$ is defined as \[\St(A):=\redhom_{n-2}(\T(A);\Q).\] We write $\St_n(\Z):=\St(\Z^n)$.
 \end{definition}

\begin{theorem}[{\cite{Bykovski}}] \label{Bykovski}
 Let $A$ be a free abelian group of rank $n$. The Steinberg module $\St(A)$ is generated by symbols $[v_1,\dots,v_n]$, one for each ordered basis $\{v_1,\dots,v_n\}$ of $A$, subject to:\begin{enumerate}
    \item$[v_1,v_2,v_3,\dots,v_n]=[v_1,v_1+v_2,v_3,\dots,v_n]+[v_1+v_2,v_2,v_3,\dots,v_n]$
    \item \label{signByk}$[v_{\sigma(1)},v_{\sigma(2)},\dots,v_{\sigma(n)}]=(-1)^\sigma[v_1,v_2,\dots,v_n]$ for every $\sigma\in \Sigma_n$
    \item $[v_1,v_2,\dots,v_n]=[-v_1,v_2,\dots,v_n].$
\end{enumerate}
The generators $[v_1,\dots,v_n]$ are called apartments.
\end{theorem}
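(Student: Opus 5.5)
The plan is to define an abstract module $B(A)$ by the stated generators and relations, construct a natural map $\Phi\colon B(A)\to \St(A)$, and show $\Phi$ is an isomorphism. Surjectivity and injectivity are proved separately, with injectivity the main work.

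\emph{Construction of $\Phi$.} For an ordered basis $v_1,\dots,v_n$, consider the subcomplex of $\T(A)$ spanned by the summands $\langle v_i : i\in I\rangle$ for $\emptyset\neq I\subsetneq\{1,\dots,n\}$. This subcomplex is the barycentric subdivision of the boundary of an $(n-1)$-simplex, i.e.\ the Coxeter complex of $\Sigma_n$. It is an $(n-2)$-sphere, and its fundamental class, oriented via the ordering of the basis, defines an element of $\St(A)$. We send $[v_1,\dots,v_n]$ to this class. The relations are then checked as follows.
\begin{itemize}
\item Relation (3): changing $v_1$ to $-v_1$ does not change any of the summands $\langle v_i : i\in I\rangle$.
\item Relation (2): reordering the basis permutes the vertices of the Coxeter complex, and this acts on the fundamental class by the sign of the permutation.
\item Relation (1): the three apartments built from $v_1,v_2,v_1+v_2$ (with $v_3,\dots,v_n$ fixed) all lie in the subcomplex built from the summands generated by $\{v_1,v_2,v_1+v_2,v_3,\dots,v_n\}$. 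Here one computes directly that the signed sum of the three fundamental classes vanishes. This is the $n=2$ computation, in which $\T(\Z^2)$ is a discrete set and an apartment is the difference of two lines, suspended/joined with the spheres coming from $v_3,\dots,v_n$.
\end{itemize}

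\emph{Surjectivity} (the Ash--Rudolph theorem). We induct on $n$ and reduce a general top cycle to integral apartments. $\St(A)$ is generated by apartment classes for arbitrary frames of lines, i.e.\ $\Q$-bases, not necessarily integral bases. Given a non-integral frame $\langle u_1\rangle,\dots,\langle u_n\rangle$, we measure its size by $|\det(u_1,\dots,u_n)|>1$. Choosing a primitive vector $w$ suitably (via a Euclidean/Minkowski-type argument), we rewrite the apartment as a sum of $n$ apartments in which $u_i$ is replaced by $w$. This is the higher-dimensional analogue of relation (1), and each new apartment has strictly smaller determinant. Iterating terminates at integral bases.

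\emph{Injectivity.} Following Church--Putman, we realize both sides as top homology of complexes of bases:
\begin{itemize}
\item Let $\mathrm{BA}(A)$ be the complex of partial bases. Let $\mathrm{BAA}(A)$ be its augmentation by \emph{additive} simplices $\{v_0,v_1,v_0+v_1,v_2,\dots\}$. Forgetting to the building, apartments correspond to top simplices of $\mathrm{BA}$ (up to signs), and relation (1) corresponds to boundaries of top additive simplices.
\item This gives an exact sequence presenting $\St(A)$ as the cokernel of the $n$-chains of the additive part mapping to the top chains of $\mathrm{BA}/\!\pm$, provided two facts hold. First, $\mathrm{BAA}(A)$ (modulo $\pm$) is $(n-1)$-connected. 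Second, the map to $\T(A)$ identifies the relevant top homology with $\St(A)$; we would prove this by a spectral sequence/Quillen fiber argument over the poset of summands, inducting on $n$ using the statement for smaller ranks.
\end{itemize}
Exactness at the top then says that every element of $\ker\Phi$ is a combination of the relations (1)--(3), which is injectivity.

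The main obstacle is the high connectivity of $\mathrm{BAA}(A)$. To prove it, I would first try a ``badness'' retraction argument in the style of Maazen/Church--Putman. One fixes a vertex $e_n$ and filters by the absolute value of the last coordinate. Bad simplices are removed one at a time, and one shows that each link is a join of a lower-rank complex of the same type with something contractible. This uses the additive simplices exactly to handle the Euclidean-algorithm steps, and the induction on $n$ supplies the connectivity of the links.
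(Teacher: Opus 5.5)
The paper does not prove this statement. It quotes it from Bykovskii and uses it as a black box, so there is no in-paper argument to compare with. Your architecture is that of Church--Putman's topological reproof, and its formal parts are sound once the injectivity step is phrased relatively.
\begin{itemize}
\item Let $\mathrm{BAA}(A)^{<n}$ be the subcomplex of simplices spanning proper summands. The relative chains of $(\mathrm{BAA}(A),\mathrm{BAA}(A)^{<n})$ live only in degree $n-1$ (bases) and degree $n$ (top additive simplices).
\item The relative boundary of $\{v_1+v_2,v_1,\dots,v_n\}$ is exactly relation (1). The faces omitting $v_i$ with $i\ge 3$ are additive simplices of rank $n-1$ and lie in the subcomplex, which is why you cannot literally land in ``the top chains of $\mathrm{BA}$''.
\item Hence $\homology_{n-1}$ of the pair is your $B(A)$. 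The long exact sequence together with the fiber-lemma comparison then gives $B(A)\cong\St(A)$ via $\Phi$. For that comparison, the fibers $\mathrm{BAA}(V)$ are $(\rk V-1)$-connected by induction and the upper links are $\T(A/V)$.
\item Surjectivity comes out of this for free, so your Ash--Rudolph step is redundant, though harmless.
\end{itemize}

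The genuine gap is the step that carries all the content: the $(n-1)$-connectivity of $\mathrm{BAA}(A)$, which is Church--Putman's $\mathcal{BA}_n$. You do not prove it, and the sketch you give cannot work as stated. If every descending link were a join with a contractible complex, every removal would be a homotopy equivalence. Your argument would then show that $\mathrm{BAA}(A)$ has the homotopy type of the good subcomplex (in this kind of argument, a cone with apex $e_n$), i.e.\ that it is contractible.

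That is false already for $n=3$. By the fiber lemma, $\mathrm{BAA}(\Z^3)^{<3}\simeq\T(\Z^3)$, since the fibers are points and contractible Farey complexes; in particular it is a graph. So contractibility of $\mathrm{BAA}(\Z^3)$ would make $0\to C_3\to C_2\to\St_3(\Z)\to 0$ exact, where $C_2,C_3$ are the relative chain modules. These are induced from finite stabilizers, so exactness would force $\homology_3(\SL_3(\Z);\St_3(\Z))=0$. But Borel--Serre duality gives $\homology_3(\SL_3(\Z);\St_3(\Z))\cong\homology^0(\SL_3(\Z);\Q)=\Q$.

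So the descending links are only highly connected, and their connectivity must be tracked precisely. They are also not lower-rank copies of $\mathrm{BAA}$. They are complexes of (augmented) partial bases relative to a fixed partial basis, so the induction has to run over such relative complexes; for plain partial bases this is the Cohen--Macaulay property proved by Maazen. The additive simplices must be accommodated in that link analysis as well, and this is where the actual work in Church--Putman's proof lies. As it stands, your proposal reduces the theorem to that connectivity result rather than proving it.
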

 $\GL(A)$ acts on $\St(A)$ by $g\cdot[v_1,\dots,v_n]=[g\cdot v_1,\dots,g\cdot v_n]$.
The proofs of \autoref{codim1} and \autoref{codim2} are based on a spectral sequence arising from a chain complex constructed by Nagpal, Patzt, and the second author \cite{MNP}.

  \begin{theorem}[{\cite{MNP}}]
  Let $X$ and $Y$ be free abelian groups of rank $n$ and $m$ respectively. There is a multiplication map \[\mu_{X,Y}:\St(X)\otimes\St(Y)\rightarrow \St(X\oplus Y)\]given by apartment concatenation, \[[v_1,\dots,v_n]\otimes[v_1',\dots,v_n']\mapsto [v_1,\dots,v_n,v_1',\dots,v_n'].\]
  \end{theorem}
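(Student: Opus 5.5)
The plan is to use the Bykovski presentation (\autoref{Bykovski}) for all three Steinberg modules. With it, well-definedness of $\mu_{X,Y}$ reduces to checking that concatenation respects the defining relations, separately in each tensor factor. (In the displayed formula the second apartment should read $[v_1',\dots,v_m']$, since $Y$ has rank $m$.)

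First I will show that concatenation produces a legitimate generator. If $\{v_1,\dots,v_n\}$ is a basis of $X$ and $\{v'_1,\dots,v'_m\}$ is a basis of $Y$, then their union is a basis of $X\oplus Y$. Hence $[v_1,\dots,v_n,v'_1,\dots,v'_m]$ is one of the Bykovski generators of $\St(X\oplus Y)$. Let $F(X)$ and $F(Y)$ be the free $\Q$-vector spaces on ordered bases of $X$ and $Y$. Concatenation defines a bilinear map $F(X)\times F(Y)\to \St(X\oplus Y)$, and therefore a linear map $\tilde\mu\colon F(X)\otimes F(Y)\to\St(X\oplus Y)$. Since $\St(X)\otimes\St(Y)$ is the quotient of $F(X)\otimes F(Y)$ by $R_X\otimes F(Y)+F(X)\otimes R_Y$, where $R_X$ and $R_Y$ are the relation subspaces, it suffices to show that $\tilde\mu$ kills both summands. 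In each case I fix a basis of one factor and vary the other.

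For the $X$ factor, fix a basis $(v'_j)$ of $Y$ and go through the three relations.
\begin{enumerate}
\item Relation (1) only changes the first two entries. After concatenation it is literally relation (1) in $\St(X\oplus Y)$.
\item For relation (2), extend $\sigma\in\Sigma_n$ to $\Sigma_{n+m}$ by fixing the last $m$ positions. This extension has the same sign, so relation (2) in $\St(X\oplus Y)$ gives the required identity.
\item Relation (3) negates the first entry, which is again relation (3) in $\St(X\oplus Y)$.
\end{enumerate}

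For the $Y$ factor, the relations act on positions $n+1$ and $n+2$ rather than the first positions. Let $\tau\in\Sigma_{n+m}$ be a permutation that moves these positions to the front while preserving their order. Applying relation (2) to rewrite every term in the relation as $(-1)^\tau$ times the $\tau$-permuted symbol, the required identity becomes $(-1)^\tau$ times an instance of relation (1) or (3) in $\St(X\oplus Y)$. The sign $(-1)^\tau$ is common to all terms, so it cancels. The permutation relation for $Y$ is handled as for $X$: extend by the identity on the first $n$ positions, which preserves the sign.

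Hence $\tilde\mu$ descends to $\mu_{X,Y}\colon\St(X)\otimes\St(Y)\to\St(X\oplus Y)$, and it is given on apartments by concatenation, as claimed. As a byproduct, the formula shows that $\mu$ is $\GL(X)\times\GL(Y)$-equivariant, because $(g,h)$ acts on each entry of the concatenated symbol. The only real obstacle is the sign bookkeeping in the $Y$ factor, which requires consistent use of relation (2). The factor $(-1)^\tau$ is uniform across each relation, so it disappears.
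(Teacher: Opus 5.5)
Your argument is correct. Since $\St(X)\otimes\St(Y)$ is the quotient of $F(X)\otimes F(Y)$ by $R_X\otimes F(Y)+F(X)\otimes R_Y$, it suffices to check that concatenation kills each Bykovskii relator tensored with a fixed ordered basis of the other factor, and your checks do exactly this. On the $X$ side, the concatenated relations are instances of the same relations in $\St(X\oplus Y)$. On the $Y$ side, a single permutation applied to every term of a relation contributes one common sign $(-1)^\tau$. You are also right that the second apartment should read $[v_1',\dots,v_m']$.

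The paper gives no proof of its own here; it quotes the statement from \cite{MNP}. So the natural comparison is with building the map from the Tits buildings rather than from a presentation. One such construction sends a pair of summands $(B,C)$, with $B\le X$, $C\le Y$ and $(B,C)\neq(0,0),(X,Y)$, to $B\oplus C$. This is a poset map into $\T(X\oplus Y)$ whose source is homotopy equivalent to the suspension of $\T(X)*\T(Y)$, and the top rational homology of that suspension is $\St(X)\otimes\St(Y)$. That route is intrinsic and needs no presentation, so it works, e.g., for vector spaces over any field. However, you must then compute separately that apartment classes go to the concatenated apartment class, up to a sign convention.

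Your route makes the apartment formula the definition. As a result, equivariance, associativity (needed for $d^2=0$ in $C_*(n)$), and the sign rule $\tau\circ\mu_{A,B}=(-1)^{mn}\mu_{B,A}\circ s$ used in \autoref{lem:img} are all immediate. The price is reliance on Bykovskii's theorem, which is a deep input available only for special rings such as $\Z$.
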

  For the rest of the paper, let \[C_k(n):=\bigoplus\limits_{A_0\oplus\dots\oplus A_k=\Z^n}\St(A_0)\otimes\dots\otimes\St(A_k).\]
 Here, and throughout the rest of the paper, all decompositions $A_0\oplus\dots\oplus A_k=\Z^n$ are ordered and have nonzero summands. Note that $C_0(n)=\St_n(\Z)$.  
 
 \begin{theorem}[{\cite{MNP}}]
For each $n$, there is a chain complex of $\GL_n(\Z)$-modules
\[
\dots \longrightarrow C_2(n)
\longrightarrow C_1(n)\longrightarrow
C_0(n)\longrightarrow 0,
\]
whose differentials are given by the alternating sum of maps induced by $$\mu_{A_i,A_{i+1}}:\St(A_i) \otimes \St(A_{i+1}) \to \St(A_i \oplus A_{i+1}). $$
\end{theorem}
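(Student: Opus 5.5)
We must show that the maps described assemble into a chain complex of $\GL_n(\Z)$-modules: the differentials are well defined and equivariant, and $\partial\circ\partial=0$. The plan is to define the differential summand by summand, check it is compatible with the Bykovski relations and with $\GL_n(\Z)$, and then reduce $\partial^2=0$ to associativity of $\mu$ together with the usual simplicial sign cancellation.

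\textbf{Step 1: well-definedness of $\mu$.} For a decomposition $A_0\oplus\dots\oplus A_k=\Z^n$ and $0\le i<k$, let
\[
d_i:\St(A_0)\otimes\dots\otimes\St(A_k)\to \St(A_0)\otimes\dots\otimes\St(A_i\oplus A_{i+1})\otimes\dots\otimes\St(A_k)
\]
apply $\mu_{A_i,A_{i+1}}$ in positions $i,i+1$ and the identity elsewhere. The target is a summand of $C_{k-1}(n)$, indexed by the merged decomposition. Set $\partial=\sum_{i=0}^{k-1}(-1)^i d_i$.

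First we check that concatenation respects the presentation in \autoref{Bykovski}.
\begin{itemize}
\item Relations (1) and (3), applied in either factor, become instances of the same relations in $\St(X\oplus Y)$, after possibly using (2) to move the affected vectors to the front.
\item Relation (2) for $\sigma\in\Sigma_n$ (or $\Sigma_m$) becomes (2) for the permutation $\sigma\times\mathrm{id}$ (or $\mathrm{id}\times\sigma$), which has the same sign.
\item Concatenating bases of $X$ and $Y$ gives a basis of $X\oplus Y$.
\end{itemize}
So $\mu_{X,Y}$ descends to a well-defined map.

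\textbf{Step 2: equivariance.} $\GL_n(\Z)$ acts on $C_k(n)$ by sending the summand indexed by $(A_0,\dots,A_k)$ to the summand indexed by $(gA_0,\dots,gA_k)$. Concatenation commutes with applying $g$ to every vector, and $g(A_i\oplus A_{i+1})=gA_i\oplus gA_{i+1}$. Hence each $d_i$ commutes with the action, and so does $\partial$.

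\textbf{Step 3: $\partial^2=0$.} This is the main point. Two facts are needed.

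\emph{Associativity.} We have
\[
\mu_{X\oplus Y,Z}\circ(\mu_{X,Y}\otimes 1)=\mu_{X,Y\oplus Z}\circ(1\otimes\mu_{Y,Z}),
\]
since both sides send a triple of apartments to the same concatenated apartment.

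\emph{Simplicial identity.} For $i<j$ we have $d_i d_j=d_{j-1}d_i$ as maps $C_k\to C_{k-2}$.
\begin{itemize}
\item If $j>i+1$, the two merges act on disjoint pairs of positions and therefore commute. The index shift occurs because merging positions $i,i+1$ lowers the later indices by one.
\item If $j=i+1$, both sides merge $A_i,A_{i+1},A_{i+2}$ into one summand, and they agree by associativity.
\end{itemize}
In both cases the resulting summand of $C_{k-2}(n)$ is indexed by the same merged decomposition.

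Expanding $\partial^2=\sum_{i,j}(-1)^{i+j}d_i d_j$, the standard face-map argument shows that the terms pair off with opposite signs, so $\partial^2=0$. The only subtle point is that the sign convention in the paper's $\mu$ introduces no extra signs. It does not: $\mu$ is plain concatenation, and the tensor factors are kept in their fixed order, so no Koszul signs arise.
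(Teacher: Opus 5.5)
Your argument is correct. The paper gives no proof of this statement; it is quoted from Miller--Nagpal--Patzt, so there is no in-paper proof to compare against.

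Up to a shift in degree, $C_*(n)$ is the reduced bar construction of the Steinberg modules, viewed as an associative monoid under concatenation. Your three steps are the standard verification that such a bar construction is a chain complex of $\GL_n(\Z)$-modules:
\begin{enumerate}
\item well-definedness of $\mu$ from Bykovski's presentation;
\item equivariance;
\item $\partial^2=0$ from strict associativity via the face identities $d_id_j=d_{j-1}d_i$, with no Koszul signs because no symmetry is used.
\end{enumerate}
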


The following theorem was proved by Patzt, Putman, and the second author \cite{MPP}, building on earlier work  of Lee--Szczarba \cite{LS}, Church--Putman \cite{CP}, second author with Br\"uck, Patzt, Sroka, and Wilson \cite{BMPSW}, and the second author with Patzt, and Wilson \cite{MPW}.

 \begin{theorem}[{\cite[Theorem D']{MPP}}]\label{exact}
   For $n\geq 4$, $\homology_i(C_*(n)) \cong 0$ for $i \leq 2$.

 \end{theorem}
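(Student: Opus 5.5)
The approach is to read $C_*(n)$ as the rank-$n$ part of a bar construction and to translate each vanishing statement into a known presentation-type fact about Steinberg modules. Assemble the $\St(A)$ into an associative algebra $\St_\bullet$ in the category of $\GL$-representations graded by rank, with multiplication given by the concatenation maps $\mu$. Only nonzero summands occur, so $\St_\bullet$ is augmented with trivial degree-$0$ part. The complex $C_*(n)$ is then exactly the rank-$n$ component of the reduced bar complex $B(\Q,\St_\bullet,\Q)$. Its homology $\homology_k(C_*(n))$ is the rank-$n$ part of $\mathrm{Tor}^{\St_\bullet}_{k+1}(\Q,\Q)$, up to the usual shift. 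The claim therefore becomes: $\St_\bullet$ has no generators, relations or second syzygies in ranks $\geq 4$. This is the kind of statement the four earlier works \cite{LS,CP,BMPSW,MPW} were designed to supply.

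I would treat the three degrees in turn.
\begin{itemize}
\item \emph{Degree $0$.} The cokernel of $C_1(n)\to C_0(n)$ vanishes for $n\geq 2$, since every apartment factors as
\[
[v_1,\dots,v_n]=\mu\bigl([v_1]\otimes[v_2,\dots,v_n]\bigr),
\]
and apartments generate $\St_n(\Z)$ by \autoref{Bykovski} (this is Lee--Szczarba). So $\St_\bullet$ is generated in rank $1$.
\item \emph{Degree $1$.} Vanishing of $\homology_1$ says all relations among products of rank-$1$ generators come from rank $\leq 3$. Here I would compare the presentation obtained from the bar complex with Bykovski's presentation in \autoref{Bykovski}. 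Relations (2) and (3) live in rank $\le 2$ after factoring out the untouched coordinates. Relation (1) involves only $v_1,v_2$, so it lives in rank $2$. Hence any relation in rank $n\geq 4$ is generated by lower-rank relations multiplied by generators, which is exactly the vanishing of $\homology_1(C_*(n))$. This is essentially the Church--Putman argument \cite{CP}.
\item \emph{Degree $2$.} Vanishing of $\homology_2$ is a ``second syzygy'' statement. I would identify $C_*(n)$ in low degrees with a complex of augmented partial bases, or of split flags. Exactness in degree $2$ would then follow from the high connectivity of suitable complexes of partial bases and augmented partial frames, as established in \cite{BMPSW} and \cite{MPW}. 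Concretely, I would filter by the number of rank-$1$ summands and use a spectral sequence whose $E^1$ page is built from Steinberg modules of complements. This reduces the claim to $2$-connectivity of a complex of (augmented) partial bases of $\Z^n$ for $n\geq 4$.
\end{itemize}

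The degree-$2$ step is the main obstacle. It requires the delicate connectivity estimate for complexes with ``augmented'' simplices (the $[v_1,v_2,v_1+v_2]$-type simplices), and the $n\geq 4$ bound is exactly where that connectivity becomes sufficient. I would take that connectivity result from \cite{BMPSW} as input rather than reprove it. My own work would go into checking that the filtration spectral sequence degenerates in total degree $\leq 2$ and that its edge maps agree with the bar differentials.
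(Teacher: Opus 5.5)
Your degree-$0$ and degree-$1$ arguments are sound, and the identification $\homology_i(C_*(n))\cong\mathrm{Tor}^{\St_\bullet}_{i+1}(\Q,\Q)_n$ is correct. The rank-graded tensor product is exact over $\Q$, so the usual formalism for connected graded algebras applies. In degree $0$, generation by integral apartments kills $\homology_0$ for $n\ge 2$. In degree $1$, Bykovski's presentation shows the relation ideal is generated in rank $2$, which kills $\homology_1$ already for $n\ge 3$. One repair is needed there: relation (2) for a general $\sigma$ must first be reduced to adjacent transpositions. Relation (3) is automatic, because $\St$ of a line is the trivial $\GL_1(\Z)$-module. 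Note that the paper does not prove this statement. It imports it from \cite[Theorem D']{MPP}, and the degree-$2$ case is the part that needed the later work \cite{BMPSW,MPW}.

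The genuine gap is degree $2$, where you give a strategy rather than an argument. You never say which complex of augmented partial bases you use, which connectivity statement you import, or how its chains are compared with $C_*(n)$.

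Suppose the filtration by the number of rank-one summands is meant on $C_*(n)$ itself. It is then a legitimate filtration, since merging never increases that number. But its associated graded only retains merges of two adjacent summands of rank $\ge 2$. So its $E^1$-page consists of bar-type complexes for the rank-$\ge 2$ part of $\St_\bullet$, not of Steinberg modules of complements of partial bases.

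More fundamentally, connectivity results like those of \cite{BMPSW} yield a partial resolution of the \emph{single} module $\St_n(\Z)$ by modules indexed by augmented partial bases. That is what one needs to compute $\homology_2(\SL_n(\Z);\St_n(\Z))$. By contrast, $\homology_2(C_*(n))=\mathrm{Tor}_3^{\St_\bullet}(\Q,\Q)_n=0$ is a statement about the multiplicative structure of $\St_\bullet$. It says that every syzygy among the quadratic Bykovski relations in rank $n\ge 4$ is a sum of products of rank-$3$ syzygies with Steinberg classes of complementary summands. Converting the first kind of input into the second is the substance of the theorem. It is exactly what you postpone as ``checking that the filtration spectral sequence degenerates and that its edge maps agree with the bar differentials.''

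Nothing supports your claim that $n\ge 4$ is where some $2$-connectivity becomes sufficient. The threshold is forced simply because $\homology_2(C_*(3))\neq 0$: the full symmetrization of a decomposition of $\Z^3$ into three lines is a cycle by relation (2), and $C_3(3)=0$. As it stands, degree $2$ is asserted, not proved.
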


 We next introduce the group that will serve as the primary object in our computations.
\begin{definition}
For $p$ a prime, let $A$ be a free abelian group and let $v\in A$. Define
\[G_A^p(v):=\{g\in\SL(A)\mid g\cdot v\equiv v \bmod pA\}.
\]

When $A=\Z^n$, we write $G_n^p(v):=G_{\Z^n}^p(v).$ If $v=e_1$, we abbreviate $G_{A}^p:=G_{A}^p(e_1).$
\end{definition}

By definition, if $v\equiv 0 \bmod pA$, then $G_A^p(v)=\SL(A).$

The group $G_n^p$ is a finite index subgroup of $\Gamma_{0,n}(p)$ that is easier to study. We will first establish vanishing results for the homology of $G_n^p$ and then use them to deduce corresponding vanishing results for $\Gamma_{0,n}(p)$.

The action of $G_n^p$ on $\C_*(n)$ gives rise to a spectral sequence. Here and throughout the rest of the paper, we set \[S_k=\{(A_0,\ldots,A_k)\mid A_0\oplus\dots\oplus A_k=\Z^n\},\]and for $\vec{A}=(A_0,\dots,A_k)\in S_k$, we denote by $[\vec{A}]$ the class of $\vec{A}$ in $G_n^p\backslash S_k$. For each $(A_0,\dots,A_k)\in S_k$, we fix elements $v_i\in A_i$ such that \[e_1\equiv v_0+\dots+v_k\bmod p,\]and use this notation throughout.
\begin{lemma}\label{lem:ss}
The $E^1$-page of the hyperhomology spectral sequence
\[E^1_{kh}(n)=\homology_h(G_n^p;C_k(n))
\Longrightarrow
\homology_{k+h}(G_n^p;C_*(n))\]
has the following alternative description: \[E^1_{kh}(n)\cong
\bigoplus_{[\vec{A}]=[A_0,\dots,A_k]\in G_n^p\backslash S_k}\homology_h\left(
\stab_{G_n^p}(\vec{A});\St(A_0)\otimes\dots\otimes\St(A_k)
\right).\]
\end{lemma}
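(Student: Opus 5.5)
The plan is to rewrite $C_k(n)$ as a direct sum of induced modules and then apply Shapiro's lemma orbit by orbit. The group $G_n^p\le \SL_n(\Z)\le \GL_n(\Z)$ acts on the set $S_k$ of ordered decompositions by $g\cdot(A_0,\dots,A_k)=(gA_0,\dots,gA_k)$. This action permutes the summands of
\[C_k(n)=\bigoplus_{\vec A\in S_k}\St(A_0)\otimes\dots\otimes\St(A_k).\]
An element $g$ sends the summand indexed by $\vec A$ isomorphically onto the summand indexed by $g\vec A$, via $[v_1,\dots]\otimes\cdots\mapsto[gv_1,\dots]\otimes\cdots$. This map is well defined because the Bykovski relations of \autoref{Bykovski} are preserved by any isomorphism $A_i\to gA_i$.

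First, decompose $S_k$ into $G_n^p$-orbits and pick a representative $\vec A$ in each orbit $[\vec A]$. The summands indexed by a single orbit form a $G_n^p$-submodule $M_{[\vec A]}$, and $C_k(n)=\bigoplus_{[\vec A]}M_{[\vec A]}$ as $G_n^p$-modules. The stabilizer $\stab_{G_n^p}(\vec A)$ preserves each $A_i$, so it acts on $W_{\vec A}:=\St(A_0)\otimes\dots\otimes\St(A_k)$.

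Second, identify each orbit summand as an induced module. The standard argument is that the map
\[\Q[G_n^p]\otimes_{\Q[\stab_{G_n^p}(\vec A)]}W_{\vec A}\to M_{[\vec A]},\qquad g\otimes w\mapsto g\cdot w,\]
is an isomorphism. A set of coset representatives $g$ for $G_n^p/\stab(\vec A)$ maps bijectively onto the orbit, and $g\cdot W_{\vec A}=W_{g\vec A}$. Third, homology commutes with direct sums, so
\[\homology_h(G_n^p;C_k(n))\cong\bigoplus_{[\vec A]}\homology_h(G_n^p;M_{[\vec A]}).\]
Shapiro's lemma then converts each term into $\homology_h(\stab_{G_n^p}(\vec A);W_{\vec A})$. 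This step uses the induced (not coinduced) module, which is the right one for homology and requires no finiteness hypothesis; the number of orbits may be infinite.

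The only real care needed, and the mild obstacle, is checking that the identification is an isomorphism of $G_n^p$-modules compatible with the spectral sequence setup. The $E^1$-page of the hyperhomology spectral sequence, obtained by filtering by the degree $k$ of $C_*(n)$, is by definition $\homology_h(G_n^p;C_k(n))$. So the statement is only a description of the $E^1$ terms and asserts nothing about differentials. Sign or orientation issues, which might normally make the stabilizer act by a twisted character, do not arise here. The stabilizer preserves each ordered summand $A_i$ and acts on $\St(A_i)$ through the restriction $g|_{A_i}\in\GL(A_i)$, so $W_{\vec A}$ carries exactly the natural tensor product action.
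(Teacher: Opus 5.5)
Your proposal is correct and follows the paper's own argument. You split $C_k(n)$ according to the $G_n^p$-orbits on $S_k$, identify each orbit summand with the module induced from $\stab_{G_n^p}(\vec A)$ acting on $\St(A_0)\otimes\dots\otimes\St(A_k)$, and apply Shapiro's lemma. You also spell out the routine checks that the paper leaves implicit: the action is well defined, there is no sign twist because the decompositions are ordered, and the modules are induced rather than coinduced.
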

\begin{proof}
This is the spectral sequence obtained from the filtered double complex
computing $\homology_*(G_n^p;C_*(n))$. The group $G_n^p$ acts on $S_k$ by
\[g\cdot (A_0,\ldots,A_k)=(g\cdot A_0,\dots,g\cdot A_k).\]
This action induces an action on $C_k(n)$. Decomposing $S_k$ into $G_n^p$-orbits, we obtain
\[
C_k(n)\cong
\bigoplus_{[\vec{A}]=[A_0,\dots,A_k]\in G_n^p\backslash S_k}\operatorname{Ind}_{\stab_{G_n^p}(\vec{A})}^{G_n^p}
\left(\St(A_0)\otimes\dots\otimes\St(A_k)\right).\]
Therefore, by Shapiro's lemma,
\[\homology_h(G_n^p;C_k(n))
\cong
\bigoplus_{[\vec{A}]\in G_n^p\backslash S_k}
\homology_h\left(
\stab_{G_n^p}(\vec{A});\St(A_0)\otimes\dots\otimes\St(A_k)
\right).\qedhere\]
\end{proof}
\begin{corollary}\label{E-infty}
   For $n\geq 4$,  the spectral sequence in \autoref{lem:ss} satisfies \[E^\infty_{01}(n)=E^\infty_{02}(n)=0.\]
\end{corollary}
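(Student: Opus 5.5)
The spectral sequence of \autoref{lem:ss} converges to $\homology_*(G_n^p;C_*(n))$. I would compute this abutment using the other hyperhomology spectral sequence of the same double complex, and then read off the two entries. The double complex is $F_*\otimes_{G_n^p} C_*(n)$, where $F_*$ is a projective resolution of $\Q$ over $\Q[G_n^p]$. Filtering the other way, by resolution degree first, gives a second spectral sequence with
\[{}'E^2_{hk}=\homology_h\left(G_n^p;\homology_k(C_*(n))\right).\]
By \autoref{exact}, for $n\geq 4$ we have $\homology_k(C_*(n))=0$ for $k\leq 2$. Hence ${}'E^2_{hk}=0$ whenever $k\leq 2$. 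Every term of total degree at most $2$ has $k\leq 2$, so all such terms vanish, and therefore
\[\homology_j(G_n^p;C_*(n))=0\quad\text{for } j\leq 2.\]
Convergence is not a concern: $C_*(n)$ is bounded, since $C_k(n)=0$ for $k\geq n$ (a decomposition into $k+1$ nonzero summands needs $k+1\leq n$). Both filtrations are therefore bounded in the relevant direction, and both spectral sequences converge to the same hyperhomology.

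Next I would return to the spectral sequence of \autoref{lem:ss}. Its $E^\infty$-page gives the graded pieces of a filtration on $\homology_{k+h}(G_n^p;C_*(n))$. The entry $E^\infty_{01}$ is a subquotient of $\homology_1(G_n^p;C_*(n))=0$, and $E^\infty_{02}$ is a subquotient of $\homology_2(G_n^p;C_*(n))=0$. Both entries therefore vanish, which proves the corollary.

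I expect no serious obstacle; the argument is formal. The one point needing care is to confirm that the spectral sequence of \autoref{lem:ss} really is the filtration of this same double complex by the $C$-degree $k$. The proof of \autoref{lem:ss} states exactly this, so the two spectral sequences share an abutment. The indices also need care: here $k$ is the chain degree and $h$ the group-homology degree, so the total degrees in question are $1$ and $2$.
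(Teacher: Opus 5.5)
Your argument is correct and is essentially the paper's proof. The paper simply asserts that, by \autoref{exact}, the abutment $\homology_j(G_n^p;C_*(n))$ vanishes for $j\leq 2$, and reads off $E^\infty_{01}(n)=E^\infty_{02}(n)=0$; your second hyperhomology spectral sequence ${}'E^2_{hk}=\homology_h(G_n^p;\homology_k(C_*(n)))$ just makes that implicit step explicit.
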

\begin{proof}
By \autoref{exact}, the spectral sequence in \autoref{lem:ss} converges to zero in total degrees $0,1,$ and $2$. In particular, for $n\geq 4$,\[E^\infty_{01}(n)=E^\infty_{02}(n)=0.\qedhere\]
\end{proof}
Observe that \[E^1_{01}(n)=\homology_1(G_n^p;\St_n(\Z)) \text{\,\, and \,\,} E^1_{02}(n)=\homology_2(G_n^p;\St_n(\Z)).\]
On the other hand,  \[E^\infty_{01}(n)=0 \text{\,\, and \,\,} E^\infty_{02}(n)=0\] for $n \geq 4$. Thus, we can establish \autoref{codim1} and \autoref{codim2} if we can show 
$E^1_{01}(n) \cong E^\infty_{01}(n)$ and $E^1_{02}(n) \cong E^\infty_{02}(n)$ for large $n$. There are no nontrivial differentials out of these groups so our task is to rule out nontrivial differentials into these groups. We will first rule out $d^2$ and higher differentials. This will imply that the differentials \[f\colon E^1_{11}(n)\rightarrow E^1_{01}(n)\]and \[g\colon E^1_{12}(n)\rightarrow E^1_{02}(n)\]are surjective. We will then show these differentials are zero.

\begin{figure}[H]
  \centering
\begin{tikzpicture}[scale=1.5]
    \draw[->] (0,0) -- (4,0) node[right] {$k$};
    \draw[->] (0,0) -- (0,3) node[above] {$h$};

    \node[fill=white, inner sep=2pt] at (0.2,0.2) {$*$};
    \node[fill=white, inner sep=2pt] at (1.2,0.2) {$*$};
    \node[fill=white, inner sep=2pt] at (2.2,0.2) {$*$};
    \node[fill=white, inner sep=2pt] at (0.2,1.2) {$*$};
    \node[fill=white, inner sep=2pt] at (1.2,1.2) {$*$};
    \node[fill=white, inner sep=2pt] at (2.2,1.2) {$*$};
    \node[fill=white, inner sep=2pt] at (3.2,0.2) {$*$};
    \node[fill=white, inner sep=2pt] at (1.2,2.2) {$*$};
    \node[fill=white, inner sep=2pt] at (0.2,2.2) {$*$};
    \draw[->] (1.1,1.2) -- node[above] {$f$}(0.3,1.2);
    \draw[->] (1.1,2.2) -- node[above] {$g$}(0.3,2.2);
  \end{tikzpicture}
 \caption{$E^1$-page.}
  \label{fig:1}
  \end{figure}

\section{The bottom row of the spectral sequence}
In this section, we study the bottom row of the spectral sequence stated in \autoref{lem:ss}. Our next goal is to prove a vanishing result for the terms \[E^1_{k0}(n)\cong\bigoplus\limits_{[\vec A]=[(A_0,\dots,A_k)]\in G_n^p/S_k}\homology_0\left(\stab_{G_n^p}(\vec{A});\St(A_0)\otimes\dots\otimes\St(A_k)\right).\]

        \begin{proposition}\label{coinv K=0}
            Let $p$ be a prime and $A$ be a free abelian group of rank $n$ with $n\geq p+2$. Let $v\in A$ be such that $v\not\equiv 0\bmod pA$. Then  \[\homology_0\left(G_A^p(v);\St(A)\right)\cong 0.\]
        \end{proposition}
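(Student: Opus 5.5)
The plan is to show that every apartment $[v_1,\dots,v_n]$ becomes zero in the coinvariants $\homology_0\left(G_A^p(v);\St(A)\right)=\St(A)_{G_A^p(v)}$. Since $\SL(A)$ acts transitively on nonzero primitive vectors modulo scalars, and $G_A^p(v)$ depends only on $v \bmod pA$, I first reduce to $A=\Z^n$ and $v\equiv \lambda e_1 \bmod p$. Conjugating by a matrix of $\SL_n(\Z)$ reducing to $\mathrm{diag}(\lambda,\lambda^{-1},1,\dots)$ then lets me take $v\equiv e_1$, so the group is $G_n^p$. By \autoref{Bykovski}, $\St_n(\Z)$ is generated by apartments. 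The problem is to show that each apartment class $[b_1,\dots,b_n]$ is zero in the coinvariants.

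For a basis $b_1,\dots,b_n$, write $e_1\equiv \sum c_i b_i \bmod p$ with $c_i\in\F_p$, so $c=(c_1,\dots,c_n)\in\F_p^n$ is the coordinate vector of $e_1$. An element $g\in\SL_n(\Z)$ lies in $G_n^p$ exactly when it fixes $e_1$ mod $p$. So the easiest source of relations is group elements acting on the basis with a controlled effect on $c$. For instance, if $c_i=0$, then $g\colon b_i\mapsto -b_i$, $b_j\mapsto -b_j$ (other basis vectors fixed) lies in $G_n^p$ whenever $c_j=0$ as well. Combining this with relation (3) of \autoref{Bykovski} gives nothing new, so I will use odd permutations instead.

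The key step uses the pigeonhole principle. Since $n\ge p+2$, the $n$ coefficients $c_i$ take at most $p$ values, so there are at least two coincidences. Either some value repeats three times, or two pairs repeat, or, more usefully, there are two indices $i\neq j$ with $c_i=c_j$ and a further disjoint pair $k\neq l$ with $c_k=c_l$. Let $g$ be the element of $\SL_n(\Z)$ with $b_i\leftrightarrow b_j$ and $b_k\leftrightarrow b_l$. It has determinant $+1$ and preserves $\sum c_ib_i$, so $g\in G_n^p$. But it acts on the apartment by an even permutation, giving only the trivial relation $x=x$. I therefore need one transposition combined with a sign change. Take $b_i\mapsto b_j$, $b_j\mapsto b_i$, and $b_k\mapsto -b_k$ with $c_k=0$. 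This has determinant $+1$ and fixes $e_1$ mod $p$, and it yields $[\dots]=-[\dots]$, so $2x=0$ and hence $x=0$ rationally. This requires a repeated value together with a zero coordinate.

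The main obstacle is producing a basis that has a zero coordinate without destroying the repetition. Here I would use relation (1) of \autoref{Bykovski} to rewrite an apartment as a sum of apartments whose coefficient vectors are more degenerate. The relation $[b_1,b_2,\dots]=[b_1,b_1+b_2,\dots]+[b_1+b_2,b_2,\dots]$ changes $(c_1,c_2)$ to $(c_1-c_2,c_2)$ and $(c_1,c_2-c_1)$ respectively. Iterating it in the style of the Euclidean algorithm on a pair of coordinates, the apartment becomes a sum of apartments each having a zero coordinate. The pigeonhole argument should then be applied to the remaining $n-1$ coordinates: with $n-1\ge p+1$ of them there is still a repeat, or else a second zero, and a second zero also works with a sign swap. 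Alternatively, one can directly find $c_i=c_j$ among the $n\ge p+2>p+1$ coordinates, giving either a pair of equal values or a zero. I expect the careful bookkeeping here to account for the bound $p+2$: one extra coordinate is spent creating a zero, and the other $p+1$ force a repeat.
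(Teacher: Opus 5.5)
Your strategy is the paper's: write $e_1\equiv\sum_i c_ib_i \bmod p$; kill an apartment with $c_k=0$ and $c_i=c_j$ ($i,j,k$ distinct) using the element $b_i\leftrightarrow b_j$, $b_k\mapsto -b_k$ of $G_n^p$ (the paper's Case 1); and use relation (1) to create zero coordinates.

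The step that does not hold up as written is the last one. ``Iterating in the style of the Euclidean algorithm on a pair of coordinates'' is not an argument: the coefficients live in $\F_p$, where nothing decreases. Worse, if $c_i\neq c_j$ are both nonzero, then by your own formula neither $(c_i-c_j,c_j)$ nor $(c_i,c_j-c_i)$ has a zero entry, so that step only produces two apartments that still have no zero coordinate. What you need is a single application of relation (1) to a pair with $c_i=c_j\neq 0$, which turns those two slots into $(0,c_j)$ and $(c_i,0)$. Such a pair exists whenever no coordinate vanishes, since then $n\geq p+2>p-1=|\F_p^\times|$. Your ``alternatively'' sentence points this way but never says to apply relation (1) to the repeated pair.

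With that repair your argument is complete, and it is a slightly cleaner organization than the paper's. An apartment with a zero coordinate dies because the other $n-1\geq p+1$ coordinates must contain a repeat. An apartment without a zero coordinate is a signed sum of two apartments that have one. The paper instead separates a nonzero triple (Case 2) from two disjoint nonzero pairs (Case 3), so that a repeat visibly survives the zeroing step, and it leaves implicit the pigeonhole check that one of its three cases always occurs. Re-applying pigeonhole to the remaining $n-1$ coordinates, as you do, removes that case split and makes the role of the bound $p+2$ transparent: one coordinate is spent on the zero, and the other $p+1$ force the repeat.
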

        \begin{proof}
        Fix $n\geq p+2$. Pick an isomorphism between $A$ and $\Z^n$ sending $v$ to $e_1$. This identifies $G_A^p(v)$ with $G_n^p$. As $\homology_0\left(G_n^p;\St_n(\Z)\right)\cong \St_n(\Z)_{G_n^p}$, it is thus enough to prove \[\St_n(\Z)_{G_n^p} \cong 0.\] 
        
       \noindent Let $[v_1,\dots,v_{n}]$ be a generator of $\St(A)$. There exists $(a_1,\dots,a_{n})\in\Z^{n}\setminus \{0\}$ satisfying \begin{equation}\label{eq:e1}\begin{aligned}
            e_1&=a_1v_1+a_2v_2+a_3v_3+\dots+a_{n}v_{n}\\
            &=a_1v_1+\dots+a_j( v_j +v_\ell) +\dots+(a_\ell-a_j) v_\ell+\dots+a_{n}v_{n}\\
            &=a_1v_1+\dots+(a_j-a_\ell)v_j +\dots+a_\ell( v_j+v_\ell)+\dots+a_{n}v_{n}.
        \end{aligned}\end{equation}
        Additionally, \begin{equation}\label{eq:v}[v_1,\dots,v_j,\dots,v_{\ell},\dots,v_{n}]=\pm [v_1,\dots,v_j+v_{\ell},\dots,v_{\ell},\dots,v_{n}]\pm [v_1,\dots,v_j,\dots,v_j+v_{\ell},\dots,v_{n}].\end{equation}
        Since $n\geq p+2$, one of the following cases occurs.\begin{mycases}
        \case $a_j \equiv 0 \bmod p$ and $a_r \equiv a_s \bmod p$ for some $1\le j<r<s\leq n$.
        
        In this case, we define $g \in \SL_n(\Z)$ by \begin{align*}
                v_j&\mapsto -v_j, \\
                v_r&\mapsto v_s,\\
                v_s&\mapsto v_r,\\
                v_m&\mapsto v_m~\text{otherwise}.
            \end{align*} Moreover, $\sum\limits_{i=1}^{n}a_ig(v_i)=e_1$. Thus \[g \in G_n^p,\]and \[g\cdot[v_1,\dots,v_{n}]=-[v_1,\dots,v_{n}].\] So $[v_1,\dots,v_{n}]$ vanishes in $(\St_n(\Z)
            )_{G_n^p}$.
            \case $a_j\equiv a_\ell\equiv a_r\not\equiv 0\bmod p$ for some $1\leq j<\ell<r\leq n$. 
         
            Set \[w_j=v_j+v_\ell,\quad w_m=v_m\text{ for $m\neq j$},\]and define $g_1\in\SL_n(\Z)$ \begin{align*}
                w_\ell&\mapsto -w_\ell, \\
                w_r&\mapsto w_j,\\
                w_j&\mapsto w_r,\\
                w_m&\mapsto w_m~\text{otherwise}.
            \end{align*} Then $g_1\in G_n^p$ by \eqref{eq:e1}, and  \[g_1\cdot [v_1,\dots,v_j+v_{\ell},\dots,v_\ell,\dots,v_r,\dots,v_{n}]=- [v_1,\dots,v_j+v_{\ell},\dots,v_\ell,\dots,v_r,\dots,v_{n}].\] Similarly, there exists $g_2\in G_n^p$ that sends $[v_1,\dots,v_j,\dots,v_j+v_{\ell},\dots,v_r,\dots,v_{n}]$ to its negative. Thus, by \eqref{eq:v}, $[v_1,\dots,v_{n}]$ vanishes in $(\St_n(\Z))_{G_n^p}$.
            \case $a_j\equiv a_\ell\not\equiv 0 \bmod p$ and $a_r\equiv a_s\not\equiv 0 \bmod p$ for some $1\leq j<\ell<r<s\leq n$. 
            
            Set \[w_j=v_j+v_\ell, \quad w_m=v_m~\text{for $m\neq j$},\] and define $g_1 \in \SL_n(\Z)$ by\begin{align*}
                w_\ell&\mapsto -w_\ell, \\
                w_r&\mapsto w_s,\\
                w_s&\mapsto w_r,\\
                v_m&\mapsto v_m~\text{otherwise}.
            \end{align*} 
         Again, $g_1 \in G_n^p$ and
\[g_1 \cdot [v_1,\dots,v_j+v_\ell,\dots,v_{n}]
= -[v_1,\dots,v_j+v_\ell,\dots,v_{n}].\]
As in the previous case, a similar $g_2$ handles the second term from \eqref{eq:v}, and we conclude that $[v_1,\dots,v_{n}]$ vanishes in $(\St_n(\Z))_{G_n^p}$.
\end{mycases}

We have thus shown that the generators of $\St_n(\Z)$ vanish in the coinvariants $(\St_n(\Z))_{G_n^p}$. Therefore, $(\St_n(\Z))_{G_n^p}\cong 0$ as claimed.\end{proof}
\begin{remark} Since $G_n^p\subset\Gamma_{0,n}(p)$, it follows by \autoref{coinv K=0} that $\homology^{\binom{n}{2}}(\Gamma_{0,n}(p);\St_n(\Z))$ vanishes for $n\geq p+2$. This is weaker than the result of the first author \cite[Theorem A]{Abdelnaim}, which establishes vanishing for the range $n\geq \frac{p+14}{6}$ as well as for the primes $p\in\{2,3,5,7,13\}$.
   \end{remark}

        \begin{lemma}\label{K in Stab}
            Let $p$ be a prime, $n\geq 1$, and $k\geq 0$. Let $\vec A=(A_0,\dots,A_k)\in S_k$ be a decomposition $\Z^n=A_0\oplus\dots\oplus A_k.$ Then $G_{A_0}^p(v_0)\times\dots\times G_{A_k}^p(v_k)$ is a finite index subgroup of $\stab_{G_n^p}(\vec{A}).$
        \end{lemma}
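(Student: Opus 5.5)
The plan is to prove two things: the product $H:=G_{A_0}^p(v_0)\times\dots\times G_{A_k}^p(v_k)$ is actually a subgroup of $\stab_{G_n^p}(\vec A)$, and its index there is finite. Throughout, the product is regarded as a subgroup of $\SL_n(\Z)$ via block-diagonal embedding. An element $(g_0,\dots,g_k)$ acts on $\Z^n=A_0\oplus\dots\oplus A_k$ by $g_i$ on $A_i$.

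\emph{Containment.} Let $g=(g_0,\dots,g_k)\in H$. Since each $g_i\in\SL(A_i)$, the block-diagonal element $g$ has determinant $\prod\det g_i=1$, so $g\in\SL_n(\Z)$. It preserves each summand, so $g\cdot A_i=A_i$ for all $i$, and hence $g$ stabilizes $\vec A$. It remains to check that $g\in G_n^p$. Using the fixed choice $e_1\equiv v_0+\dots+v_k\bmod p\Z^n$, we compute
\[
g\cdot e_1\equiv \sum_i g_i\cdot v_i\equiv \sum_i v_i\equiv e_1 \bmod p\Z^n.
\]
Here we use that $g$ preserves $p\Z^n$, so the congruence can be pushed through $g$. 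We also use that $g_i v_i\equiv v_i\bmod pA_i$, and that $pA_i\subseteq p\Z^n$. Hence $g\in\stab_{G_n^p}(\vec A)$.

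\emph{Finite index.} The stabilizer of $\vec A$ in $\SL_n(\Z)$ consists of block-diagonal matrices $(h_0,\dots,h_k)$ with $h_i\in\GL(A_i)$ and $\prod\det h_i=1$. Consider the subgroup $\SL(A_0)\times\dots\times\SL(A_k)$ of this stabilizer; it is the kernel of the map to $\{\pm1\}^{k+1}$ sending an element to its tuple of determinants, so its index is at most $2^{k+1}$. Next, each $G_{A_i}^p(v_i)$ has finite index in $\SL(A_i)$. To see this, note that it contains the principal congruence subgroup $\ker\bigl(\SL(A_i)\to\SL(A_i/pA_i)\bigr)$, which has finite index because $\SL(A_i/pA_i)$ is finite. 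This also covers the case $v_i\equiv0$, where $G_{A_i}^p(v_i)=\SL(A_i)$. Consequently $H$ has finite index in $\prod_i\SL(A_i)$, hence in the full stabilizer $\stab_{\SL_n(\Z)}(\vec A)$. Since $H\le\stab_{G_n^p}(\vec A)\le\stab_{\SL_n(\Z)}(\vec A)$, the index of $H$ in $\stab_{G_n^p}(\vec A)$ is finite as well.

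The only point needing care is the congruence computation in the containment step: one has to use that the $v_i$ were chosen so that $e_1\equiv\sum v_i$, and that $pA_i=A_i\cap p\Z^n$. The latter holds because $A_i$ is a direct summand of $\Z^n$. Everything else is routine, so I do not expect a serious obstacle.
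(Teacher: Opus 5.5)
Your proof is correct. The containment step matches the paper's, but your finite-index step takes a different route.

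The paper first characterizes $\stab_{G_n^p}(\vec A)$ exactly. A block-diagonal $g_0\oplus\dots\oplus g_k$ lies in $G_n^p$ if and only if $\prod_i\det(g_i)=1$ and $g_iv_i\equiv v_i\bmod pA_i$ for every $i$. The ``only if'' direction uses $p\Z^n=pA_0\oplus\dots\oplus pA_k$ to split the congruence $ge_1\equiv e_1$ into componentwise congruences. The paper then observes that your $H$ is \emph{precisely} the kernel of the determinant-tuple map $\delta\colon\stab_{G_n^p}(\vec A)\to\{\pm1\}^{k+1}$. The image of $\delta$ lies in the order-$2^k$ subgroup of sign vectors with product $1$.

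You instead prove only the easy inclusion $H\le\stab_{G_n^p}(\vec A)$. You then place $H$ inside $\stab_{\SL_n(\Z)}(\vec A)$, using two facts: $\prod_i\SL(A_i)$ has $2$-power index there, and each $G^p_{A_i}(v_i)$ contains the level-$p$ principal congruence subgroup of $\SL(A_i)$.

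The paper's route gives more: $H$ is normal in the stabilizer with index dividing $2^k$. Your route avoids the exact description of the stabilizer, at the cost of a non-explicit index bound involving $[\SL(A_i):G^p_{A_i}(v_i)]$ and no normality. The later applications only need coinvariants to surject and transfer over $\Q$, so plain finite index suffices and both arguments do the job.

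One small correction to your last paragraph. Your argument only ever uses $pA_i\subseteq p\Z^n$. The equality $pA_i=A_i\cap p\Z^n$ is what the paper needs for the converse direction, namely that every element of $\ker\delta$ lies in $H$. Your containment step does not need it.
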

        \begin{proof}
            Choose a representative $\vec A=(A_0,\dots,A_{k})\in S_k$ of the orbit $[\vec A]\in\Gamma_{0,n}(p)\backslash S_k$.  Then \[e_1\equiv v_0+\dots+v_k\bmod p\quad\text{for some $v_i\in A_i$}.\]Let $I=\{i\mid v_i\not\equiv 0\bmod pA_i\}$.
            Fix $g_i : A_i \to A_i$ for $i=0,\ldots,k$ and let \[g=g_0 \oplus \ldots \oplus g_k : \Z^n = A_0 \oplus \ldots \oplus A_k \longrightarrow A_0 \oplus \ldots \oplus A_k =\Z^n.\]  
Note that $g \in G_n^p$ if and only if \[\prod_{i=0}^k\det(g_i)=1,\] and \[g_i\cdot v_i\equiv v_i\bmod pA_i \quad\text{for all $i\in I$}.\] By construction, $g$ stabilizes $\vec A$ and all elements of $\GL_n(\Z)$ that stabilize $A$ are direct sums of this form. In other words,
\[
\stab_{G_n^p}(\vec{A})
=
\left\{
g_0 \oplus \dots \oplus g_k
\;\middle|\;
\begin{array}{c}
g_i\in\GL(A_i)\text{ for all }i,\\
\prod_{i=0}^{k}\det(g_i)=1,\\
g_i\cdot v_i\equiv  v_i \bmod pA_i
\text{ for all }i\in I
\end{array}
\right\}.
\]
        Consequently, $G_{A_0}^p(v_0)\times\dots\times G_{A_k}^p(v_k)$ is a subgroup of $\stab_{G_n^p}(\vec{A})$. Additionally, define $\delta\colon \stab_{G_n^p}(\vec{A})\longrightarrow \{\pm1\}^{k+1}$
by\[
\delta\left((g_0,\dots,g_k)\right)=\left(\det(g_0),\dots,\det(g_k)\right).\]
The image of $\delta$ is contained in the finite subgroup \[\left\{(\varepsilon_0,\dots,\varepsilon_k)\in\{\pm1\}^{k+1}\mid \varepsilon_0\cdots\varepsilon_k=1\right\}.\]
The kernel of $\delta$ is precisely $G_{A_0}^p(v_0)\times\dots\times G_{A_k}^p(v_k)$. Therefore, $G_{A_0}^p(v_0)\times\dots\times G_{A_k}^p(v_k)$ has finite index in $\stab_{G_n^p}(\vec{A})$.
        \end{proof}
\begin{proposition}\label{stab=0}
   Let $p$ be a prime, $k\geq 0$, and $n\geq (k+1)(p+1)+1$. Then \[E^1_{k0}(n)=0.\] 
   
\end{proposition}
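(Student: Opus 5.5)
By \autoref{lem:ss}, $E^1_{k0}(n)$ is a direct sum over orbits $[\vec A]\in G_n^p\backslash S_k$ of the coinvariants
\[\homology_0\left(\stab_{G_n^p}(\vec A);\St(A_0)\otimes\dots\otimes\St(A_k)\right).\]
So it suffices to show that each summand vanishes. We will not use the full stabilizer. Instead we pass to the subgroup $H=G_{A_0}^p(v_0)\times\dots\times G_{A_k}^p(v_k)$ from \autoref{K in Stab}. Coinvariants of a subgroup surject onto coinvariants of the larger group: the natural map $M_H\to M_{\stab}$ is surjective. Hence it is enough to show $(\St(A_0)\otimes\dots\otimes\St(A_k))_H=0$. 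Since $H$ is a product acting factorwise, we get
\[\left(\St(A_0)\otimes\dots\otimes\St(A_k)\right)_H\cong \St(A_0)_{G_{A_0}^p(v_0)}\otimes\dots\otimes\St(A_k)_{G_{A_k}^p(v_k)}.\]
This reduces the problem to showing that a single tensor factor is zero.

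Since $e_1\equiv v_0+\dots+v_k \bmod p$ and $e_1\not\equiv 0$, at least one $v_i$ satisfies $v_i\not\equiv 0\bmod pA_i$. The ranks satisfy $\sum_i \rk A_i=n\geq (k+1)(p+1)+1$. By pigeonhole, some $A_j$ has rank at least $p+2$, because otherwise $n\le (k+1)(p+1)$.

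\textbf{Case 1: $v_j\not\equiv 0\bmod pA_j$.} Then \autoref{coinv K=0} gives $\St(A_j)_{G^p_{A_j}(v_j)}=0$ directly, and the tensor product vanishes.

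\textbf{Case 2: $v_j\equiv 0\bmod pA_j$.} Then $G^p_{A_j}(v_j)=\SL(A_j)$. We need $\St(A_j)_{\SL(A_j)}=0$ for $\rk A_j\geq 2$, which holds in our range since $p+2\ge 4$. This follows from the Lee--Szczarba vanishing, or directly: an element of $\SL(A_j)$ that negates one basis vector and swaps two others sends an apartment to its negative, using \autoref{Bykovski}. Alternatively, \autoref{coinv K=0} applies to any $v\not\equiv 0$ with $G^p(v)\subseteq \SL(A_j)$, and surjectivity of coinvariants again gives vanishing.

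The main subtlety is the bookkeeping of the choice of $v_i$: the $v_i$ are chosen only mod $p$. One must check that the description of the stabilizer in \autoref{K in Stab} really contains the product subgroup for that fixed choice. That lemma already does this, so the argument reduces to the pigeonhole step and the case split above. In either case one tensor factor is zero, so every summand of $E^1_{k0}(n)$ is zero, and therefore $E^1_{k0}(n)=0$.
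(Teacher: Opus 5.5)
Your proposal is correct and follows the paper's proof essentially step for step. Like the paper, you restrict to the product subgroup $G_{A_0}^p(v_0)\times\dots\times G_{A_k}^p(v_k)$ of the stabilizer and split the coinvariants as a tensor product. You then use pigeonhole to find a summand of rank at least $p+2$, and kill its factor with the coinvariant-vanishing proposition or, when $v_j\equiv 0 \bmod pA_j$, with Lee--Szczarba. Your alternative arguments for the $\SL(A_j)$ case (the explicit sign-reversing element in rank at least $3$, or restricting to $G^p_{A_j}(v)$ for some $v\not\equiv 0$ and using surjectivity of coinvariants) are valid and make that step self-contained, but they do not change the structure of the proof.
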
\begin{proof} 
Fix $n\geq (k+1)(p+1)+1$. Recall that \[E^1_{k0}(n)\cong \bigoplus_{[\vec{A}]\in G_n^p\backslash S_k}\homology_0\left(\stab_{G_n^p}(\vec{A});\St(A_0)\otimes\dots\otimes\St(A_k)\right),\]where $\vec A=(A_0,\dots,A_k)\in S_k$. We will show that each summand is zero. Since \[A_0\oplus\dots\oplus A_k=\Z^n,\] we have that $n=\sum\limits_{i=0}^k\rk(A_i)$. By \autoref{K in Stab}, $G_{A_0}^p(v_0)\times\dots\times G_{A_k}^p(v_k)$ is a subgroup of $\stab_{G_n^p}(\vec{A})$. Thus
passing to coinvariants induces a natural surjection
\begin{equation}\label{eq:surj}
\left(\St(A_0)\otimes\cdots\otimes\St(A_k)\right)_{G_{A_0}^p(v_0)\times\dots\times G_{A_k}^p(v_k)}
\rightarrow
\left(\St(A_0)\otimes\cdots\otimes\St(A_k)\right)_{\stab_{G_n^p}(\vec{A})}.\end{equation}
Therefore, it suffices to show that
\[\left(\St(A_0)\otimes\cdots\otimes\St(A_k)\right)_{G_{A_0}^p(v_0)\times\dots\times G_{A_k}^p(v_k)}
\cong0.\]

By the K\"unneth formula,
\begin{equation}\label{tensor}
\homology_0\left(G_{A_0}^p(v_0)\times\dots\times G_{A_k}^p(v_k);\St(A_0)\otimes\dots\otimes\St(A_k)\right)
\cong\bigotimes_{i=0}^{k}\homology_0\left(G_{A_i}^p(v_i);\St(A_i)\right).\end{equation}
Since
\[
n=\sum_{i=0}^k n_i\geq (k+1)(p+1)+1,
\]
there exists some $i$ such that $n_i\geq p+2$. If $v_i\neq 0\bmod pA_i$, then by \autoref{coinv K=0},
\[
\homology_0\left(G_{A_i}^p(v_i);\St(A_i)\right)\cong 0.
\]
If $v_i=0\bmod pA_i$, then
\[
\homology_0(G_{A_i}^p(v_i);\St(A_i))
=\homology_0(\SL(A_i);\St(A_i)) \cong 0\] for $n \geq 2$ by Lee-Szczarba \cite[Theorem 1.3]{LS}.
In either case, one tensor factor of \eqref{tensor} is zero. Thus
\[
\homology_0\left(G_{A_0}^p(v_0)\times\dots\times G_{A_k}^p(v_k);
\St(A_0)\otimes\dots\otimes\St(A_k)
\right)
\cong \left(\St(A_0)\otimes\dots\otimes\St(A_k)\right)_{G_{A_0}^p(v_0)\times\dots\times G_{A_k}^p(v_k)}\cong 0.\] It then follows by the surjection \eqref{eq:surj}\[
\homology_0\left(\stab_{G_n^p}(\vec{A});
\St(A_0)\otimes\dots\otimes\St(A_k)
\right)
\cong \left(\St(A_0)\otimes\dots\otimes\St(A_k)\right)_{\stab_{G_n^p}(\vec{A})}\cong 0.\qedhere\]
\end{proof}
\section{Vanishing of the codimension-1 cohomology of  \texorpdfstring{$\Gamma_{0,n}(p)$}{Lg}}
In this section, we prove \autoref{codim1}. The first step is to show that the differential \[f \colon E^1_{11}(n)\rightarrow E^{1}_{01}(n)\]is surjective.
\begin{lemma}\label{surj}
     Let $p$ be a prime. The map \[f \colon E^1_{11}(n)\rightarrow E^{1}_{01}(n)\]is surjective for $n\geq 3p+4$.
\end{lemma}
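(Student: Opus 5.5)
Since $E^\infty_{01}(n)=0$ for $n\geq 4$ by \autoref{E-infty}, and no differentials leave $E_{01}$, the group $E^1_{01}(n)$ is killed entirely by the incoming differentials $d^1=f$ from $E^1_{11}(n)$ and $d^r$ for $r\geq 2$. The only higher differential that can hit the $(0,1)$ position is $d^2\colon E^2_{-1+2,\,1-1}=E^2_{20}(n)\to E^2_{01}(n)$. In general $d^r$ lands in $E^r_{01}$ from position $(r,2-r)$, and the source has negative $h$-degree for $r\geq 3$. The plan is therefore to show that $E^2_{20}(n)=0$. Then $E^\infty_{01}=E^2_{01}=E^1_{01}/\Img(f)$, and vanishing of $E^\infty_{01}$ forces $f$ to be surjective.

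To kill $E^2_{20}(n)$ it suffices to show $E^1_{20}(n)=0$. Here I would invoke \autoref{stab=0} with $k=2$. It gives $E^1_{20}(n)=0$ whenever $n\geq 3(p+1)+1=3p+4$, which is exactly the hypothesis of the lemma. Since $E^2_{20}$ is a subquotient of $E^1_{20}$, it vanishes as well. Also $3p+4\geq 4$ for every prime $p$, so \autoref{E-infty} applies.

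So the argument is short: combine \autoref{E-infty} with \autoref{stab=0} at $k=2$, and check which differentials can reach the $(0,1)$ spot. The only point requiring care is this bookkeeping. The $d^r$ on the $E^r$-page has bidegree $(-r,r-1)$, so the only possible sources are $(1,1)$ for $r=1$ and $(2,0)$ for $r=2$. I would also note that $E^2_{01}$ is precisely the cokernel of $f$, since nothing nonzero maps out of the column $k=0$. I expect no real obstacle beyond this indexing check.
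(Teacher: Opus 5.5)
Your proposal is correct and follows the paper's proof: $E^\infty_{01}(n)=0$ by \autoref{E-infty}, the only higher differential into position $(0,1)$ is $d^2$ from $E^2_{20}(n)$, and this vanishes by \autoref{stab=0} with $k=2$ once $n\geq 3p+4$, so $E^2_{01}(n)$, the cokernel of $f$, is zero. The only blemish is a typo: the source index $E^2_{-1+2,\,1-1}$ should read $E^2_{0+2,\,1-1}$, though you correctly arrive at $E^2_{20}(n)$.
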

\begin{proof}
   Recall from \autoref{E-infty} that $E^\infty_{01}(n)=0$ for $n\geq 4$. Since the spectral sequence is first quadrant, there are no nonzero outgoing differentials from $E^r_{01}(n)$. The only possible nonzero incoming differential for $r \geq 2$ is
\[d^2\colon E^2_{20}(n)\to E^2_{01}(n).\] 
Since $n\geq 3p+4$, \autoref{stab=0} establishes that $E^1_{k0}(n)=0$ for $k\leq 2$. In particular
\[E^2_{20}(n)=0.\]
Therefore no nonzero higher differential can hit $E^2_{01}(n)$. Additionally, since no nonzero differential can leave $E^r_{01}(n)$, it follows that
\[E^2_{01}(n)=E^\infty_{01}(n)=0.\]
Thus
\[f=d^1 \colon E^1_{11}(n)\to E^1_{01}(n)
\]
is surjective.
\end{proof}

\begin{lemma}\label{surj'}
The natural map 
\begin{equation*}
    \begin{aligned}
\bigoplus_{k=1}^{n-1}\bigoplus_{i=0}^{k}
\bigoplus_{\substack{[(A_0,\dots,A_k)]\in G_n^p\backslash S_k\\
\rk (A_i) \le 3p+3}}
\homology_0(G_{A_0}^p(v_0);\St(A_0))\otimes& \cdots \otimes
\homology_1(G_{A_i}^p(v_i);\St(A_i))
\otimes \cdots \otimes
\homology_0(G_{A_k}^p(v_k);\St(A_k))\\&
\longrightarrow
\homology_1(G_n^p;\St_n(\Z))
\end{aligned}
\end{equation*} is surjective for $n \geq 3p+4$.
\end{lemma}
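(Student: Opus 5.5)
By \autoref{surj}, $f\colon E^1_{11}(n)\to E^1_{01}(n)=\homology_1(G_n^p;\St_n(\Z))$ is surjective for $n\geq 3p+4$. The plan is therefore to show that $E^1_{11}(n)$ is generated by images of the claimed tensor products, and that the composite with $f$ is the natural map in the statement. The index $k$ in the statement will be handled by an induction, described in the third paragraph.

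First I would analyze a single summand of $E^1_{11}(n)$, namely $\homology_1(\stab_{G_n^p}(\vec A);\St(A_0)\otimes\St(A_1))$ for $\vec A=(A_0,A_1)$. By \autoref{K in Stab}, $K=G_{A_0}^p(v_0)\times G_{A_1}^p(v_1)$ has finite index in the stabilizer. With rational coefficients the transfer shows that $\homology_1(K;-)\to\homology_1(\stab;-)$ is surjective. The Künneth formula over $\Q$ then gives
\[\homology_1(K;\St(A_0)\otimes\St(A_1))\cong \bigoplus_{a+b=1}\homology_a(G^p_{A_0}(v_0);\St(A_0))\otimes\homology_b(G^p_{A_1}(v_1);\St(A_1)).\]
So $E^1_{11}(n)$ is a quotient of a sum of terms of the form $\homology_1\otimes\homology_0$ and $\homology_0\otimes\homology_1$. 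The differential $d^1$ is induced by $\mu_{A_0,A_1}$ together with the inclusion $K\to G_n^p$. Composing, $\homology_1(G_n^p;\St_n(\Z))$ is spanned by images of these tensor products under the natural maps, which is exactly the $k=1$ part of the map in the statement.

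It remains to impose the rank condition $\rk(A_i)\le 3p+3$ on the factor carrying $\homology_1$. Consider a term $\homology_1(G^p_{A_0}(v_0);\St(A_0))\otimes\homology_0(\cdots)$ with $\rk A_0\geq 3p+4$. I would apply the same argument to $A_0$ in place of $\Z^n$, which is legitimate after choosing a basis sending $v_0$ to $e_1$. If $v_0\equiv 0 \bmod pA_0$, the factor is $\homology_1(\SL(A_0);\St(A_0))=0$ by Church--Putman. Otherwise $\homology_1(G^p_{A_0}(v_0);\St(A_0))$ is again spanned by images from decompositions of $A_0$ into two pieces. Substituting and using associativity of concatenation produces terms indexed by decompositions of $\Z^n$ with one more summand. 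Each refinement strictly lowers the rank of the summand carrying $\homology_1$, so after finitely many steps every $\homology_1$ factor sits on a summand of rank $\le 3p+3$. This yields surjectivity from the full sum over $1\le k\le n-1$.

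The main obstacle is compatibility in this substitution step. The images must land in the correct $G_n^p$-orbit summands, and the signs from the alternating-sum differential must be handled; since we only need spanning sets, signs are harmless. The points to verify are that the elements $v_i$ chosen for the finer decomposition can be taken to be the sub-components of $v_0$, and that the map $\homology_*(\prod G^p_{A_i}(v_i);\bigotimes\St(A_i))\to\homology_*(G_n^p;\St_n(\Z))$ factors through the coarser product group. The second holds because concatenation is associative and the product group for the finer decomposition is contained in $G^p_{A_0}(v_0)\times\cdots$.
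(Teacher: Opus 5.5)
Your proposal is correct and follows essentially the same route as the paper. Both arguments use surjectivity of $f$ from \autoref{surj}, then pass to the finite-index product subgroup via transfer and apply K\"unneth to reduce to two-piece decompositions. They then split any $\homology_1$ factor that sits on a summand of rank at least $3p+4$, using Church--Putman when $v_i\equiv 0 \bmod p$; your rank-decreasing iteration is just the paper's induction on $n$ unrolled.

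The bookkeeping points you flag all check out, and the paper leaves them implicit. The $v_i$ are determined mod $p$ by the decomposition, and concatenation is associative. Replacing a refined decomposition by its chosen orbit representative does not change the image, because $G_n^p$ acts trivially on its own homology.
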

\begin{proof}
    By \autoref{surj}, the map \[f\colon E^1_{11}(n)\to E^1_{01}(n)=\homology_1(G_n^p;\St_n(\Z))\]is surjective for $n\geq 3p+4$. By \autoref{K in Stab}, for every $(A_0,A_k)\in S_1$, $G_{A_0}^p(v_0)\times G_{A_1}^p(v_1)$ is a finite index subgroup of $\stab_{G_n^p}((A_0,A_1))$. Since we work over $\Q$, \[G_{A_0}^p(v_0)\times G_{A_1}^p(v_1) \to \stab_{G_n^p}((A_0,A_1))\] induces a surjection on homology. Thus
\begin{equation}\label{twoterms}
\begin{aligned}
\bigoplus_{\substack{[(A_0,A_1)]\in G_n^p\backslash S_1\\ \rk(A_i)\leq n-1}}
\homology_0(G_{A_0}^p(v_0);\St(A_0))\otimes&
\homology_1(G_{A_1}^p(v_1);\St(A_1)) 
\quad\oplus\quad\homology_1(G_{A_0}^p(v_0);\St(A_0))
\otimes\homology_0(G_{A_1}^p(v_1);\St(A_1))\\&\qquad\qquad
\longrightarrow
\homology_1(G_n^p;\St_n(\Z))
\end{aligned}
\end{equation}is surjective for $n \geq 3p+4$.

    We will now prove 
    \begin{equation}\label{manyterms}
\begin{aligned}
\bigoplus_{k=1}^{n-1}\bigoplus_{i=0}^{k}
\bigoplus_{\substack{[(A_0,\dots,A_k)]\in G_n^p\backslash S_k\\
\rk (A_i) \le 3p+3}}
\homology_0(G_{A_0}^p(v_0);\St(A_0))\otimes& \cdots \otimes
\homology_1(G_{A_i}^p(v_i);\St(A_i))
\otimes \cdots \otimes
\homology_0(G_{A_k}^p(v_k);\St(A_k))\\&
\longrightarrow
\homology_1(G_n^p;\St_n(\Z))
\end{aligned}
\end{equation} is surjective by induction on $n$.

    For the base case, let $n=3p+4$. Then $n-1 =3p+3$. Thus, for $n=3p+4$, the map \eqref{twoterms} factors through \eqref{manyterms}. Since the map \eqref{twoterms} is surjective for all $n \geq 3p+4$, the map \eqref{manyterms} is surjective for $n=3p+4$. This establishes the base case of the induction.
 
Now fix $m \geq 3p+4$ and assume by induction that the map \eqref{manyterms} is surjective for all $n$ in the range $3p+4 \leq n <m$. The map \eqref{twoterms} for $n=m$ is surjective since $m \geq 3p+4$. Unlike the $3p+4$ case, there may be tensor factors of the form $\homology_1(G^p_{A_i}(v_i);\St(A_i))$ with $\rk(A_i) > 3p+3$ in the domain of the map from \eqref{twoterms}. 

If $v_i\equiv 0 \bmod pA_i$, then $\homology_1(G_{A_i}(v_i);\St(A_i))\cong 0$ by Church--Putman \cite[Theorem A]{CP}. If $v_i\not\equiv 0\bmod pA_i$, fix an isomorphism $$A_i \cong \Z^r$$ where $\rk(A_i)=r$. Since $r < m$, the induction hypothesis implies that the map \eqref{manyterms} is surjective for $n=r$. We can use this surjection to replace the terms of the form $\homology_1(G^p_{A_i}(v_i);\St(A_i))$ in the domain of the map \eqref{twoterms} with a sum of tensor products of several terms of the form \[\homology_0(G^p_{A_j}(v_j);\St(A_j))\] and exactly one term of the form \[\homology_1(G^p_{A_l}(v_l);\St(A_l))\] with $\rk(A_l) \leq 3p+3$. Thus, the map \eqref{manyterms} is surjective for $n=m$. The claim follows by induction.
 \end{proof}

   \begin{lemma}\label{lem:img}
       Let $A$ and $B$ be free abelian groups of ranks $m$ and $n$ respectively and let $h\geq 0$. Let $v_1\in A$ and $v_2\in B$. Then the maps \[\homology_h(G_A^p(v_1);\St(A))\otimes\homology_0(G_B^p(v_2);\St(B))\rightarrow\homology_h(G_{A\oplus B}^p(v_1+v_2);\St(A\oplus B))\]and \[\homology_0(G_B^p(v_2);\St(B))\otimes\homology_h(G_A^p(v_1);\St(A))\rightarrow\homology_h(G_{A\oplus B}^p(v_1+v_2);\St(A\oplus B))\]have the same image.
   \end{lemma}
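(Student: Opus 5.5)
The plan is to relate the two maps by the swap isomorphism. Let $\tau\colon A\oplus B\to B\oplus A$ be the coordinate swap, $(a,b)\mapsto(b,a)$. Identifying $B\oplus A$ with $A\oplus B$, we view $\tau$ as an automorphism of $A\oplus B$.

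First we check how apartments behave under concatenation. For an apartment $[a_1,\dots,a_m]\otimes[b_1,\dots,b_n]$, the second multiplication map sends $[b_1,\dots,b_n]\otimes[a_1,\dots,a_m]$ to $[b_1,\dots,b_n,a_1,\dots,a_m]$. By relation \eqref{signByk} of \autoref{Bykovski}, this equals $(-1)^{mn}[a_1,\dots,a_m,b_1,\dots,b_n]$. So on the level of Steinberg modules we have
\[\mu_{B,A}(y\otimes x)=(-1)^{mn}\mu_{A,B}(x\otimes y),\]
where both sides are regarded as elements of $\St(A\oplus B)$ for the same internal direct sum. No automorphism is needed here: the sign comes entirely from reordering the basis inside the apartment.

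Next we lift this to group homology. Both maps in the statement factor through $\homology_h(G_A^p(v_1)\times G_B^p(v_2);\St(A)\otimes\St(B))$. The first uses the Künneth cross product, and the second uses the cross product in the other order. Composing with the inclusion $G_A^p(v_1)\times G_B^p(v_2)\hookrightarrow G^p_{A\oplus B}(v_1+v_2)$ and with $\mu$ gives the maps of the statement. This inclusion holds because $(g_1\oplus g_2)(v_1+v_2)\equiv v_1+v_2$ and the determinant is $1$.

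The key step is the standard graded commutativity of the homology cross product. Under the isomorphism $G_A\times G_B\cong G_B\times G_A$, together with the swap of coefficient tensor factors, we have $x\times y\mapsto (-1)^{h\cdot 0}\,y\times x=y\times x$, since one of the two classes has degree $0$. The coefficient swap $\St(A)\otimes\St(B)\to\St(B)\otimes\St(A)$ intertwines $\mu_{A,B}$ and $\mu_{B,A}$ up to the sign $(-1)^{mn}$ computed above. The relevant group map is the identity on the subgroup $G_A^p(v_1)\times G_B^p(v_2)\le G^p_{A\oplus B}(v_1+v_2)$, written in the two orders. It follows that for $x\in\homology_h(G_A^p(v_1);\St(A))$ and $y\in\homology_0(G_B^p(v_2);\St(B))$, the image of $y\otimes x$ under the second map is $(-1)^{mn}$ times the image of $x\otimes y$ under the first map.

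Finally, both maps are linear and signs are units in $\Q$, so the two images coincide. The main obstacle is bookkeeping: making sure the cross product in the two orders really is related only by the coefficient swap and the sign $(-1)^{h\cdot 0}=1$. At chain level this is concrete, because a degree-$0$ class is represented by a coefficient element. Concretely, $x\otimes[y]$ is represented by a chain $\sum \sigma_j\otimes c_j\otimes y$ in the bar complex of $G_A^p(v_1)$, pushed into $G^p_{A\oplus B}(v_1+v_2)$. Swapping the coefficient order gives the same chain up to the sign from \eqref{signByk}, which settles the degree-$0$ case directly.
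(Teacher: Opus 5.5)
Your proof is correct and is essentially the paper's argument. Both rest on the identity $\mu_{B,A}\circ s=(-1)^{mn}\mu_{A,B}$ from the sign relation in Bykovski's presentation, which shows the two maps differ by a unit once the tensor factors are swapped. You work with the internal direct sum, so the paper's transport by $\tau$ is unnecessary; your opening sentence treating $\tau$ as an automorphism of $A\oplus B$ is unused and can be dropped. You also state explicitly the cross-product sign $(-1)^{h\cdot 0}=1$, which the paper's ``commutes up to sign'' diagram uses without comment.
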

   \begin{proof}
The first map is induced by the inclusion
\[G_A^p(v_1)\times G_B^p(v_2)\subseteq G_{A\oplus B}^p(v_1+v_2)
\]
together with the multiplication map
\[\mu_{A,B}\colon \St(A)\otimes \St(B)\longrightarrow \St(A\oplus B).\]
The second map is induced similarly using
\[G_B^p(v_2)\times G_A^p(v_1)\subseteq G_{B\oplus A}^p(v_2+v_1).\]
Let
\[
\tau\colon A\oplus B\longrightarrow B\oplus A,\qquad (a,b)\mapsto (b,a).
\]
Conjugation by $\tau$ identifies
$G_{A\oplus B}^p(v_1+v_2)
\cong G_{B\oplus A}^p(v_2+v_1),$
and $\tau$ also identifies $\St(A\oplus B)$ with $\St(B\oplus A)$. 

By the second relation of \autoref{Bykovski},
\[\tau\circ\mu_{A,B}=(-1)^{mn} \mu_{B,A}\circ s,\]where $s(x\otimes y)=y\otimes x.$ Thus the following diagram commutes up to multiplication by $(-1)^{mn}$:
\[
\begin{tikzcd}
\homology_h(G_A^p(v_1);\St(A))\otimes \homology_0(G_B^p(v_2);\St(B))
\arrow[r, "\alpha"]
\arrow[d, "s_*"']&
\homology_h(G_{A\oplus B}^p(v_1+v_2);\St(A\oplus B))
\arrow[d, "\tau_*"]
\\\homology_0(G_B^p(v_2);\St(B))\otimes \homology_h(G_A^p(v_1);\St(A))
\arrow[r,swap,"\beta"']&
\homology_h(G_{B\oplus A}^p(v_2+v_1);\St(B\oplus A))
\end{tikzcd}\] Since $s_*$ and $\tau_*$ are isomorphisms, the commutative diagram implies \[\tau_*(\Img(\alpha))=\Img(\beta).\] Since the second map in the statement is $\tau_*^{-1}\circ\beta$, it follows that
$\Img(\alpha)=\Img(\tau_*^{-1}\circ\beta).$
Therefore the two maps in the statement have the same image.
\end{proof}

    \begin{theorem}\label{H1=0}
        Let $p$ be a prime. Then \[\homology_1(G_n^p;\St_n(\Z))\cong 0\]for $n\geq 4p+5$.
    \end{theorem}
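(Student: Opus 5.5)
We will use \autoref{surj'}: since $4p+5\geq 3p+4$, the group $\homology_1(G_n^p;\St_n(\Z))$ is spanned by the images of the tensor products
\[\homology_0(G_{A_0}^p(v_0);\St(A_0))\otimes\cdots\otimes\homology_1(G_{A_i}^p(v_i);\St(A_i))\otimes\cdots\otimes\homology_0(G_{A_k}^p(v_k);\St(A_k)),\]
where $\rk(A_i)\leq 3p+3$. It therefore suffices to show that each such image vanishes.

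Fix one such summand. Since $\rk(A_i)\leq 3p+3$ and $n\geq 4p+5$, the complement $B=\bigoplus_{j\neq i}A_j$ has rank at least $n-(3p+3)\geq p+2$. The map into $\homology_1(G_n^p;\St_n(\Z))$ factors through the product $G_{A_i}^p(v_i)\times G_B^p(w)$, where $w=\sum_{j\ne i}v_j$, and through the concatenation $\St(A_i)\otimes\St(B)\to\St(\Z^n)$. Here we use that the multiplication maps $\mu$ are associative, up to the sign coming from reordering.

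To make this precise, we first apply \autoref{lem:img} repeatedly. This lets us move the $\homology_1$ factor to the front without changing the image, so the map factors as
\[\homology_1(G_{A_i}^p(v_i);\St(A_i))\otimes\Big(\bigotimes_{j\neq i}\homology_0(G_{A_j}^p(v_j);\St(A_j))\Big)\to \homology_1(G_{A_i}^p(v_i);\St(A_i))\otimes\homology_0(G_B^p(w);\St(B))\to\homology_1(G_n^p;\St_n(\Z)).\]
The middle arrow comes from the inclusion $\prod_{j\neq i}G_{A_j}^p(v_j)\subseteq G_B^p(w)$ together with iterated concatenation. One has to check that $\prod_{j\ne i} G^p_{A_j}(v_j)$ really lies in $G^p_B(w)$, and that concatenation is equivariant for these groups; both follow directly from the definitions.

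It then remains to show $\homology_0(G_B^p(w);\St(B))=0$, using $\rk B\geq p+2$. If $w\not\equiv 0\bmod pB$, this is \autoref{coinv K=0}. If $w\equiv 0\bmod pB$, then $G_B^p(w)=\SL(B)$ and the coinvariants vanish by Lee--Szczarba. In either case the summand maps to zero, and so $\homology_1(G_n^p;\St_n(\Z))=0$.

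The step we expect to need the most care is the factorization through $G_B^p(w)$. This requires the compatibility of the iterated multiplication maps with the group inclusions, including the signs and reordering handled by \autoref{lem:img}. The rank estimate itself, $n-(3p+3)\geq p+2$, is exactly what produces the bound $4p+5$.
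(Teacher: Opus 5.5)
Your proposal is correct and follows the paper's proof essentially step for step. You reduce via \autoref{surj'}, reorder with \autoref{lem:img}, collapse the $\homology_0$ factors onto the complementary summand of rank at least $n-(3p+3)\geq p+2$, and kill that factor with \autoref{coinv K=0}. Moving the $\homology_1$ factor to the front rather than the back makes no difference. Your explicit treatment of the case $w\equiv 0 \bmod pB$ via Lee--Szczarba is a small extra bit of care: the paper's proof of this theorem cites only \autoref{coinv K=0}, which assumes the vector is nonzero mod $p$.
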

    \begin{proof} Fix $n \geq 4p+5$. By \autoref{surj'}, there is a surjective map \begin{equation}\label{map:s} \begin{aligned}
\bigoplus_{k=1}^{n-1}\bigoplus_{i=0}^{k}
\bigoplus_{\substack{[(A_0,\dots,A_k)]\in G_n^p\backslash S_k\\
\rk (A_i) \le 3p+3}}
\homology_0(G_{A_0}^p(v_0);\St(A_0))\otimes& \cdots \otimes
\homology_1(G_{A_i}^p(v_i);\St(A_i))
\otimes \cdots \otimes
\homology_0(G_{A_k}^p(v_k);\St(A_k))\\&
\longrightarrow
\homology_1(G_n^p;\St_n(\Z)).
\end{aligned}\end{equation}
By \autoref{lem:img}, the order of the tensor factors does not matter. Thus, the map \eqref{map:s} has the same image as the following map:
\begin{equation}\label{onei} \bigoplus_{k= 1}^{n-1} \bigoplus_{\substack{[(A_0,\dots,A_k)]\in G_n^p\backslash S_k\\  \rk (A_k) \le 3p+3}}
\homology_0(G_{A_0}^p(v_0);\St(A_0))\otimes\dots\otimes\homology_1(G_{A_k}^p(v_k);\St(A_k))\longrightarrow \homology_1(G_n^p;\St_n(\Z)).\end{equation} By grouping the $\homology_0$ tensor factors together, we see that the map \eqref{onei} factors through the following map:
\begin{equation}\label{vanishingdomeq}  \bigoplus_{\substack{[(A_0,A_1)]\in G_n^p\backslash S_1\\  \rk (A_1) \le 3p+3}}
\homology_0(G_{A_0}^p(v_0);\St(A_0))\otimes\homology_1(G_{A_1}^p(v_1);\St(A_1))\longrightarrow \homology_1(G_n^p;\St_n(\Z)).\end{equation} If $(A_0,A_1) \in S_1$ and $\rk (A_1) \leq 3p+3$, then \[\rk(A_0) \geq n-(3p+3) \geq (4p+5)-(3p+3)=p+2.\]
 By \autoref{coinv K=0}, $\homology_0(G_{A_0}^p(v_0);\St(A_0)) \cong 0$ if $\rk (A_0) \geq p+2$. Thus, the domain of the map of \eqref{vanishingdomeq} vanishes. Since the map from \eqref{vanishingdomeq} is surjective and zero, its codomain vanishes. 
\end{proof}

     We now conclude with the proof of \autoref{codim1}, which states that \[\homology_1(\Gamma_{0,n}(p);\St_n(\Z))= 0\]for $n\geq 4p+5$.
    \begin{proof}[Proof of \autoref{codim1}]
      We have that $G_n^p$ is a finite index subgroup of $\Gamma_{0,n}(p)$. Since we work over the rationals, we obtain a surjection
\[\homology_1(G_n^p;\St_n(\Z))
\rightarrow
\homology_1(\Gamma_{0,n}(p);\St_n(\Z)).\]
As the domain is zero by \autoref{H1=0} for $n\geq 4p+5$, it follows that $\homology_1(\Gamma_{0,n}(p);\St_n(\Z))$ is zero. By Borel--Serre duality, we conclude that
\[\homology^{\binom n2-1}(\Gamma_{0,n}(p);\mathbb Q)=0\]for $n\geq 4p+5$.
    \end{proof}
     \section{Vanishing of the codimension-2 cohomology of \texorpdfstring{$\Gamma_{0,n}(p)$}{Lg}}
     In this section, we prove \autoref{codim2}. Following the strategy of the previous section, we will show that the differential \[g \colon E^1_{12}(n)\rightarrow E^{1}_{02}(n)\]is surjective and zero in a stable range.
    \begin{lemma}\label{E1=0}
          Let $p$ be a prime, $k\geq 0$, and $n\geq 4p+5+k(p+1)$.
Then \[E^1_{k1}(n)=0.\]
    \end{lemma}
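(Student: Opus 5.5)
We prove the vanishing summand by summand, in the same way as \autoref{stab=0}. By \autoref{lem:ss},
\[E^1_{k1}(n)\cong\bigoplus_{[\vec A]\in G_n^p\backslash S_k}\homology_1\left(\stab_{G_n^p}(\vec A);\St(A_0)\otimes\dots\otimes\St(A_k)\right).\]
By \autoref{K in Stab}, the product $K=G_{A_0}^p(v_0)\times\dots\times G_{A_k}^p(v_k)$ has finite index in $\stab_{G_n^p}(\vec A)$. Since we work over $\Q$, the transfer shows that the map $\homology_1(K;-)\to\homology_1(\stab_{G_n^p}(\vec A);-)$ is surjective. It therefore suffices to show that $\homology_1(K;\St(A_0)\otimes\dots\otimes\St(A_k))=0$ for every $\vec A\in S_k$.

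Apply the Künneth formula over $\Q$. The group $\homology_1(K;\bigotimes_i\St(A_i))$ is a direct sum, over $j=0,\dots,k$, of the terms
\[\homology_0(G_{A_0}^p(v_0);\St(A_0))\otimes\dots\otimes\homology_1(G_{A_j}^p(v_j);\St(A_j))\otimes\dots\otimes\homology_0(G_{A_k}^p(v_k);\St(A_k)).\]
We show that each such term contains a vanishing tensor factor. Write $n_i=\rk(A_i)$. The factor $\homology_0(G_{A_i}^p(v_i);\St(A_i))$ vanishes when $n_i\geq p+2$. If $v_i\not\equiv 0\bmod pA_i$, this is \autoref{coinv K=0}. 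If $v_i\equiv 0$, the group is $\SL(A_i)$ and Lee--Szczarba applies. The factor $\homology_1(G_{A_j}^p(v_j);\St(A_j))$ vanishes when $n_j\geq 4p+5$. If $v_j\not\equiv0$, this is \autoref{H1=0} after choosing an isomorphism $A_j\cong\Z^{n_j}$ sending $v_j$ to $e_1$. If $v_j\equiv 0$, the group is $\SL(A_j)$ and Church--Putman \cite[Theorem A]{CP} applies.

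It remains to check the counting. Fix $j$ and suppose, for contradiction, that the $j$-th term is nonzero. Then $n_j\leq 4p+4$ and $n_i\leq p+1$ for all $i\neq j$. This gives
\[n=\sum_i n_i\leq 4p+4+k(p+1)<4p+5+k(p+1)\leq n,\]
a contradiction. Hence every term vanishes, and therefore $E^1_{k1}(n)=0$.

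The only real content is the pigeonhole count, together with the case split according to whether $v_i\equiv 0$. One point needs a little care: the Künneth decomposition for the product group with the tensor product coefficient module. It holds because we work over $\Q$ and the coefficient module is an external tensor product. This is the same identification already used in degree $0$ in \eqref{tensor}.
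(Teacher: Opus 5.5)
Your proof is correct and is essentially the paper's argument: pass to the finite-index product subgroup from \autoref{K in Stab}, apply K\"unneth, and use the rank count. Then either some $\homology_0$ factor has rank $\geq p+2$ (killed by \autoref{coinv K=0} or Lee--Szczarba), or the $\homology_1$ factor has rank $\geq 4p+5$ (killed by \autoref{H1=0} or Church--Putman). Your explicit handling of $v_i\equiv 0$ for the $\homology_0$ factors is slightly more careful than the paper's write-up of this lemma. One nitpick: the identification $A_j\cong\Z^{n_j}$ need only send $v_j$ to $e_1$ modulo $p$, since $v_j$ need not be primitive, and this suffices because $G_{A_j}^p(v_j)$ depends only on $v_j \bmod pA_j$.
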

    \begin{proof}
        Recall that \[E^1_{k1}(n)\cong\bigoplus_{[\vec{A}]\in G_n^p\backslash S_k}\homology_1\left(\stab_{G_n^p}(\vec{A});\St(A_0)\otimes\dots\otimes\St(A_k)\right),\]where $\vec A=(A_0,\dots,A_k)\in S_k$. We will show that each summand is zero. By \autoref{K in Stab}, the product $G_{A_0}^p(v_0)\times\dots\times G_{A_k}^p(v_k)$ is a finite index subgroup of $\stab_{G_n^p}(\vec{A})$. Since we work over $\Q$, it is enough to show \[\homology_1\left(G_{A_0}^p(v_0)\times\dots\times G_{A_k}^p(v_k);\St(A_0)\otimes\dots\otimes\St(A_k)\right)\cong 0.\] By the K\"unneth formula, this group is isomorphic to  \[\bigoplus_{i=0}^k\left(\bigotimes_{j< i}\homology_0(G_{A_j}^p(v_j);\St(A_j))~\otimes~\homology_1(G_{A_i}^p(v_i);\St(A_i))~\otimes~\bigotimes_{j> i}\homology_0(G_{A_j}^p(v_j);\St(A_j))\right).\] 
        
        Fix $0\leq i\leq k$. If there exists $j\neq i$ such that $\rk(A_j)\geq p+2$, then \[\homology_0(G_{A_j}^p(v_j);\St(A_j))\cong 0\]
by \autoref{coinv K=0}. Thus the summand \[\bigotimes_{j< i}\homology_0(G_{A_j}^p(v_j);\St(A_j))~\otimes~\homology_1(G_{A_i}^p(v_i);\St(A_i))~\otimes~\bigotimes_{j> i}\homology_0(G_{A_j}^p(v_j);\St(A_j))\]vanishes in this case.

Therefore, a summand is zero unless $\rk(A_j)\leq p+1$ for all $j\neq i$. In this case, \[\sum_{j\neq i} \rk(A_j)\leq k(p+1).\] Since $n\geq 4p+5+k(p+1)$, we have\[\rk(A_i)=n-\sum_{j\neq i}\rk(A_j)\geq n-k(p+1)\geq 4p+5.\] If $v_i\not\equiv 0\bmod p$, then choosing an identification $A_i\cong \Z^{n_i}$ that sends $v_i$ to $e_1$ modulo $p$, together with \autoref{H1=0} gives
\[\homology_1(G_{A_i}^p(v_i);\St(A_i))\cong 0.\]
Now consider the case that $v_i\equiv 0\bmod p$. Then $G_{A_i}^p(v_i)=\SL(A_i)$. Since $\rk(A_i) \geq 4p+5 \geq 3$, 
\[\homology_1(G_{A_i}^p(v_i);\St(A_i))=\homology_1(\SL(A_i);\St(A_i))\cong 0\] by Church--Putman \cite[Theorem A]{CP}. Thus every summand  \[\bigotimes_{j< i}\homology_0(G_{A_j}^p(v_j);\St(A_j))~\otimes~\homology_1(G_{A_i}^p(v_i);\St(A_i))~\otimes~\bigotimes_{j> i}\homology_0(G_{A_j}^p(v_j);\St(A_j))\]vanishes. Consequently, \[\homology_1\left(G_{A_0}^p(v_0)\times\dots\times G_{A_k}^p(v_k);\St(A_0)\otimes\dots\St(A_k)\right)\cong 0.\]We conclude that $E^1_{k1}(n)=0$ for $n\geq 4p+5+k(p+1)$.
    \end{proof}
    We now turn to show that the map $g$ in the spectral sequence is surjective.
    \begin{lemma}
    \label{gursj}
          Let $p$ be a prime and $n\geq 6p+7$. Then the map \[g \colon E^1_{12}(n)\rightarrow E^{1}_{02}(n)\]is surjective.
    \end{lemma}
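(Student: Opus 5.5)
Following \autoref{surj}, I will show that every differential into $E^r_{02}(n)$ for $r\geq 2$ has zero source, use $E^\infty_{02}(n)=0$ from \autoref{E-infty} (valid since $n\geq 6p+7\geq 4$), and conclude that $d^1=g$ is surjective. The spectral sequence is first quadrant and $E^r_{02}(n)$ lies in column $0$, so there are no outgoing differentials. Hence $E^\infty_{02}(n)$ is the quotient of $E^1_{02}(n)$ by the images of all incoming differentials. These are $d^1\colon E^1_{12}(n)\to E^1_{02}(n)$, which is $g$, then $d^2\colon E^2_{21}(n)\to E^2_{02}(n)$, and finally $d^3\colon E^3_{30}(n)\to E^3_{02}(n)$. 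For $r\geq 4$ the source of $d^r$ sits in negative $h$-degree, so it vanishes.

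\textbf{Killing the $d^3$ source.} \autoref{stab=0} with $k=3$ gives $E^1_{30}(n)=0$ as soon as $n\geq 4(p+1)+1=4p+5$. This holds because $n\geq 6p+7$. So $E^3_{30}(n)=0$.

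\textbf{Killing the $d^2$ source.} \autoref{E1=0} with $k=2$ gives $E^1_{21}(n)=0$ once $n\geq 4p+5+2(p+1)=6p+7$. This is exactly the hypothesis, and it is what dictates the bound $6p+7$. So $E^2_{21}(n)=0$.

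With both sources zero, $E^2_{02}(n)=E^\infty_{02}(n)=0$. Since $E^2_{02}(n)$ is the cokernel of $g=d^1\colon E^1_{12}(n)\to E^1_{02}(n)$, the map $g$ is surjective.

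The only point needing care is the bookkeeping: that no further incoming differentials exist, and that the inequality $n\geq 6p+7$ simultaneously covers both vanishing results and the $n\geq 4$ needed for \autoref{E-infty}. Both checks are immediate. The real difficulty, showing that $g$ is zero, is deferred to later in the section.
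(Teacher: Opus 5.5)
Your proposal is correct and matches the paper's proof essentially step for step. Like the paper, you use \autoref{E1=0} with $k=2$ (which is what forces $n\geq 6p+7$) and \autoref{stab=0} with $k=3$ to kill the sources of the only possible higher incoming differentials $d^2\colon E^2_{21}(n)\to E^2_{02}(n)$ and $d^3\colon E^3_{30}(n)\to E^3_{02}(n)$, and then conclude from $E^\infty_{02}(n)=0$ that $E^2_{02}(n)=\operatorname{coker}(g)=0$.
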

    \begin{proof}
       Recall by \autoref{E-infty} that $E^\infty_{02}(n)=0$ for $n\geq 4$. We will show that $E^2_{02}(n)=E^\infty_{02}(n)$.

        Since the spectral sequence is first quadrant, there are no nonzero outgoing nonzero differentials from $E^r_{02}(n)$. The only possible incoming nonzero higher differentials are \[d^2\colon E^2_{21}(n)\rightarrow E^2_{02}(n) \, \, \text{ and } \, \, d^3\colon E^3_{30}(n)\rightarrow E^3_{02}(n).\]
        \autoref{E1=0} establishes that $E^1_{21}(n)=0$ for $n\geq 4p+5+2(p+1)=6p+7$. In particular, \[E^2_{21}(n)=0\quad\text{for $n\geq 6p+7$}.\] Additionally, \autoref{stab=0} shows that $E^1_{30}(n)=0$ for $n\geq 4(p+1)+1=4p+5$. In particular, \[E^3_{30}=0\quad\text{for $n\geq 6p+7$}.\] Therefore \[
E^2_{02}(n)=E^\infty_{02}(n)=0.\]We conclude that \[g=d^1\colon E^1_{12}(n)\to E^1_{02}(n)\]is surjective for $n\geq 6p+7$.
    \end{proof}
    
    \begin{lemma}\label{surj:B}
         The natural map 
\[
\bigoplus_{k=1}^{n-1}
\bigoplus_{[(A_0,\ldots,A_k)]\in G_n^p\backslash S_k}
\Bigg(
\bigoplus_{\substack{0\le r\le k\\ \rk(A_r)\le 6p+6}}
\homology_0(G_{A_0}^p(v_0);\St(A_0))
\otimes\cdots\otimes\homology_2(G_{A_r}^p(v_r);\St(A_r))
\otimes\cdots\otimes\homology_0(G_{A_k}^p(v_k);\St(A_k))\]
\[\oplus\]\[\bigoplus_{\substack{0\le r<s\le k\\ \rk(A_r)\leq 3p+3\\ \rk(A_s)\le 3p+3}}
\homology_0(G_{A_0}^p(v_0);\St(A_0))\otimes\cdots\otimes
\homology_1(G_{A_r}^p(v_r);\St(A_r))
\otimes\cdots\otimes
\homology_1(G_{A_s}^p(v_s);\St(A_s))\otimes\cdots\otimes\homology_0(G_{A_k}^p(v_k);\St(A_k))
\Bigg)\]
\[\longrightarrow \homology_2(G_n^p;\St_n(\mathbb Z)).
\]is surjective for $n\geq 6p+7$.
    \end{lemma}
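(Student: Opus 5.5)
The plan mirrors the proof of \autoref{surj'}, with one extra layer of induction to handle the $\homology_1\otimes\homology_1$ terms. We start from \autoref{gursj}: for $n\geq 6p+7$ the differential $g\colon E^1_{12}(n)\to E^1_{02}(n)=\homology_2(G_n^p;\St_n(\Z))$ is surjective. By \autoref{K in Stab}, for each $(A_0,A_1)\in S_1$ the product $G_{A_0}^p(v_0)\times G_{A_1}^p(v_1)$ has finite index in $\stab_{G_n^p}((A_0,A_1))$. Since we work over $\Q$, the induced map on $\homology_2$ is surjective. Applying the Künneth formula then gives a surjection onto $\homology_2(G_n^p;\St_n(\Z))$ from
\[\bigoplus_{[(A_0,A_1)]\in G_n^p\backslash S_1}\ \bigoplus_{a+b=2}\homology_a(G_{A_0}^p(v_0);\St(A_0))\otimes\homology_b(G_{A_1}^p(v_1);\St(A_1)),\]
with both ranks at most $n-1$. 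This domain contains three kinds of terms: $\homology_2\otimes\homology_0$, $\homology_0\otimes\homology_2$, and $\homology_1\otimes\homology_1$.

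We then induct on $n\geq 6p+7$. The base case $n=6p+7$ requires care. Every $\homology_2$ factor has rank $\le n-1=6p+6$, so those terms are already of the allowed form. A $\homology_1$ factor, however, may have rank up to $6p+6>3p+3$. We handle it by the argument of \autoref{surj'}, which works for every $n$, not only large ones. If the relevant $v_i\equiv 0\bmod p$, Church--Putman gives $\homology_1(\SL(A_i);\St(A_i))=0$ once $\rk(A_i)\ge 3$. If $\rk(A_i)\ge 3p+4$, \autoref{surj'} rewrites the factor as a sum of tensor products of $\homology_0$ factors and a single $\homology_1$ factor of rank $\le 3p+3$. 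If $\rk(A_i)\le 3p+3$, nothing needs to be done. We apply this to each of the two $\homology_1$ factors in turn; the ranks add up correctly because the refined summands again give a decomposition of $\Z^n$. The result is a term of the second allowed type, namely two $\homology_1$ factors of rank $\le 3p+3$ with $\homology_0$ factors elsewhere. The rewriting is compatible with the maps because the multiplication maps $\mu$ are associative and the inclusions of the groups $G^p$ are compatible with direct sums.

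For the inductive step at $n=m$, we assume the statement holds for all ranks in $[6p+7,m)$. We treat $\homology_1$ factors exactly as in the base case. For a $\homology_2$ factor $\homology_2(G_{A_i}^p(v_i);\St(A_i))$ there are three cases:
\begin{itemize}
\item if $\rk(A_i)\le 6p+6$, it is already of the allowed form;
\item if $v_i\equiv0\bmod p$ and $\rk(A_i)\ge 4$, the factor vanishes by \cite{BMPSW} (codimension-two vanishing for $\SL$);
\item otherwise, we identify $A_i\cong\Z^r$ with $v_i\mapsto e_1$ modulo $p$ and apply the inductive hypothesis with $6p+7\le r<m$.
\end{itemize}
The inductive hypothesis replaces the factor by terms of the two allowed shapes, and concatenating tensor factors keeps the total structure inside the domain of the map. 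The rank bounds are preserved because the new summands refine $A_i$ and the total rank is still $n$.

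The main obstacle is bookkeeping rather than any single estimate. We must check that the natural maps commute with these substitutions: associativity of concatenation and of the stabilizer inclusions $G^p_{A}(v)\times G^p_{B}(w)\subseteq G^p_{A\oplus B}(v+w)$, together with the Künneth decomposition. We must also check that the resulting $v$'s, which sum to $e_1$ modulo $p$, are compatible with the fixed choices of $v_i$. Where orderings of factors differ we may need sign or ordering adjustments, which we expect \autoref{lem:img} to absorb up to image. The reduction of a $\homology_1$ factor of large rank also needs $\rk\ge 3p+4$ to invoke \autoref{surj'}, which is exactly the threshold where that lemma applies.
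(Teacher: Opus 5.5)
Your proposal is correct and follows the paper's proof essentially step for step. Both arguments start from the surjectivity of $g$ (\autoref{gursj}), pass to $G_{A_0}^p(v_0)\times G_{A_1}^p(v_1)$ via \autoref{K in Stab} and K\"unneth, and then induct on $n\geq 6p+7$: large-rank $\homology_1$ factors are removed by Church--Putman or rewritten by \autoref{surj'}, and large-rank $\homology_2$ factors are removed by \cite{BMPSW} or rewritten by the inductive hypothesis. One small point: the appeal to \autoref{lem:img} is unnecessary here, because the statement already allows the $\homology_1$ and $\homology_2$ factors in any position.
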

    \begin{proof}
        By \autoref{gursj}, the map \[g\colon E^1_{12}(n)\rightarrow E^1_{02}(n)=\homology_2(G_n^p;\St_n(\Z))\]is surjective for $n\geq 6p+7$. By \autoref{K in Stab}, $G_{A_0}^p(v_0)\times G_{A_1}^p(v_1)$ is a finite index subgroup of $\stab_{G_n^p}((A_0,A_1))$. Since we work over $\Q$, \[G_{A_0}^p(v_0)\times G_{A_1}^p(v_1) \to \stab_{G_n^p}((A_0,A_1))\] induces a surjection on homology. Thus 
        \begin{equation}\label{3terms}
           \begin{aligned}
\bigoplus_{\substack{[(A_0,A_1)]\in G_n^p\backslash S_1\\ \rk(A_i)\le n-1}}\Big(&\homology_0(G_{A_0}^p(v_0);\St(A_0))
 \otimes \homology_2(G_{A_1}^p(v_1);\St(A_1))\\&\oplus\;
\homology_2(G_{A_0}^p(v_0);\St(A_0))\otimes\homology_0(G_{A_1}^p(v_1);\St(A_1))\\&\oplus\;\homology_1(G_{A_0}^p(v_0);\St(A_0)) \otimes \homology_1(G_{A_1}^p(v_1);\St(A_1))
\Big)\longrightarrow\homology_2(G_n^p;\St_n(\Z))
\end{aligned}
        \end{equation}is surjective for $n\geq 6p+7.$ 

        We will now prove \begin{equation}\label{many}
  \begin{aligned}
\bigoplus_{k=1}^{n-1}
\bigoplus_{[(A_0,\ldots,A_k)]\in G_n^p\backslash S_k}
\Bigg(
\bigoplus_{\substack{0\le r\le k\\ \rk(A_r)\le 6p+6}}
\homology_0(G_{A_0}^p(v_0);\St(&A_0))
\otimes\cdots\otimes\homology_2(G_{A_r}^p(v_r);\St(A_r))
\otimes\cdots\otimes\homology_0(G_{A_k}^p(v_k);\St(A_k))\\
&\oplus\\\bigoplus_{\substack{0\le r<s\le k\\ \rk(A_r)\leq 3p+3\\ \rk(A_s)\le 3p+3}}
\homology_0(G_{A_0}^p(v_0);\St(A_0))\otimes\cdots\otimes
\homology_1(G_{A_r}^p(v_r);\St&(A_r)\otimes\cdots\otimes
\homology_1(G_{A_s}^p(v_s);\St(A_s))\otimes\cdots\otimes\homology_0(G_{A_k}^p(v_k);\St(A_k))
\Bigg)\\
&\hspace{-1cm}\longrightarrow \homology_2(G_n^p;\St_n(\mathbb Z)).
\end{aligned}
\end{equation}is surjective by induction on $n$.

  For the base case, let $n=6p+7$. We will show that the surjection map \eqref{3terms} factors through the map \eqref{many}. For the terms \[
\homology_0(G_{A_0}^p(v_0);\St(A_0))
\otimes\homology_2(G_{A_1}^p(v_1);\St(A_1))\quad\text{and}\quad
\homology_2(G_{A_0}^p(v_0);\St(A_0))
\otimes\homology_0(G_{A_1}^p(v_1);\St(A_1)),
\] we have both $\rk(A_0)$ and $\rk(A_1)$ are at most $6p+6$. Thus the $\homology_2$ tensor factor satisfies the rank condition appearing in \eqref{many}. 

\noindent It remains to consider the terms of the form \[\homology_1(G_{A_0}^p(v_0);\St(A_0))
\otimes
\homology_1(G_{A_1}^p(v_1);\St(A_1)).\] If both $\rk(A_0)$ and $\rk(A_1)$ are at most $3p+3$, then this term already appears in the second summand of \eqref{many}. Otherwise, one of the two ranks
is at least $3p+4$. Without loss of generality, suppose
$\rk(A_0)\ge 3p+4.$
Since $n=6p+7$, we then have
\[\rk(A_1)=n-\rk(A_0)\le 6p+7-(3p+4)=3p+3.
\] 
If $v_0 \equiv 0\bmod pA_0$, then Church--Putman \cite[Theorem A]{CP} implies \[\homology_1(G_{A_0}^p(v_0);\St(A_0)) =\homology_1(\SL(A_0);\St(A_0)) \cong 0\] since $\rk(A_0) \geq 3p+4  \geq 3$. Thus, 
we may assume that $v_0\not\equiv 0\bmod pA_0$. Fix an isomorphism $$A_0\cong\Z^{\rk(A_0)},$$ and use \eqref{surj'} to replace $\homology_1(G_{A_0}^p(v_0);\St(A_0))$ with a sum of product of several terms of the form $\homology_0(G^p_{A_j}(v_j);\St(A_j))$ and a term of the form $\homology_1(G^p_{A_l}(v_l);\St(A_l))$ with $\rk(A_l) \leq 3p+3$. Repeating this for the other factor if necessary, the second summand of \eqref{3terms} factors through the second summand of \eqref{many}. 

\noindent Thus the map \eqref{3terms} factors through the map \eqref{many}. Since the map \eqref{3terms} is surjective for $n\geq 6p+7$, the map \eqref{many} is surjective for $n=6p+7$. This establishes the base case.

Now fix $m \geq 6p+7$ and assume by induction that the map \eqref{many} is surjective for all $n$ in the range $6p+7 \leq n <m$. The map \eqref{3terms} for $n=m$ is surjective since $m \geq 6p+7$. Unlike the $6p+7$ case, there may be tensor factors of the form $\homology_2(G^p_{A_i}(v_i);\St(A_i))$ with $\rk(A_i) > 6p+6$ in the domain of the map \eqref{3terms}. 

First consider a term containing an $\homology_2$ tensor factor, say $
\homology_2(G_{A_0}^p(v_0);\St(A_0)).$ If $\rk(A_0)\leq 6p+6$, then this term already appears in the map
\eqref{many} so we can assume $\rk(A_0)>6p+6$. If $v_0\equiv 0\bmod pA_0$, then \[
\homology_2(G_{A_0}^p(v_0);\St(A_0))
=\homology_2(\SL(A_0);\St(A_0)) \cong 0\] for $n \geq 4$ by Brück, Patzt, Sroka, Wilson and the second author \cite[Theorem B]{BMPSW}. Now consider the case that $v_0\not\equiv 0\bmod pA_0$. Fix an isomoprhism $$A_0\cong \Z^{\rk(A_0)}.$$ Since $\rk(A_0)<m$, the induction hypothesis applies to $A_0$. It follows that we may replace 
$\homology_2(G_{A_0}^p(v_0);\St(A_0))$
by a sum of tensor products of the two types appearing in
\eqref{many}:
\begin{enumerate}
    \item tensor products with one $\homology_2$ tensor factor, supported
    on a summand of rank at most $6p+6$;
    \item tensor products with two $\homology_1$ tensor factors, supported
    on summands of ranks at most $3p+3$.
\end{enumerate} Thus, tensoring with the $\homology_0$ tensor factor shows that the original term factors through the domain of \eqref{many}.

It remains to consider terms of the form
\[
\homology_1(G_{A_0}^p(v_0);\St(A_0))
\otimes
\homology_1(G_{A_1}^p(v_1);\St(A_1)).
\]
If both $\rk(A_0)$ and $\rk(A_1)$ are at most $3p+3$, then this term
already appears in \eqref{many}. If one of the ranks is greater than
$3p+3$, we apply \autoref{surj'} to that $\homology_1$ tensor factor. Repeating
this for the other factor if necessary, we replace the term
$\homology_1(G_{A_0}^p(v_0);\St(A_0))
\otimes
\homology_1(G_{A_1}^p(v_1);\St(A_1))$ by a sum of tensor products of terms of the form \[\homology_0(G_{A_j}^p(v_j);\St(A_j))\] and exactly two terms of the form \[\homology_1(G_{A_{l_1}}^p(v_{l_1});\St(A_{l_1})\quad\text{and}\quad \homology_1(G_{A_{l_2}}^p(v_{l_2});\St(A_{l_2})\] with $\rk(A_{l_1}),\rk(A_{l_2})\leq 3p+3$. Thus these terms factor
through the second summand in the domain of \eqref{many}.

Therefore the surjection map \eqref{3terms} factors through the map \eqref{many}. Since \eqref{3terms} is surjective, the map \eqref{many} is surjective for $n=m$. The claim follows by induction. \end{proof}
    \begin{theorem}\label{H2=0}
          Let $p$ be a prime. Then \[\homology_2(G_n^p;\St_n(\Z))\cong 0\]for $n\geq 7p+8$.
    \end{theorem}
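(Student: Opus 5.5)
Fix $n\geq 7p+8$. The plan mirrors the proof of \autoref{H1=0}. Since $7p+8\geq 6p+7$, \autoref{surj:B} gives a surjection onto $\homology_2(G_n^p;\St_n(\Z))$ from a domain with two kinds of summands. The first kind consists of tensor products with a single $\homology_2$ factor on a summand $A_r$ with $\rk(A_r)\le 6p+6$, all other factors being $\homology_0$. The second kind consists of tensor products with two $\homology_1$ factors on summands $A_r,A_s$ of rank at most $3p+3$, all other factors being $\homology_0$. It suffices to show that every summand of the domain vanishes, after possibly regrouping factors.

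\textbf{Regrouping the factors.} By \autoref{lem:img}, and its evident analogue for moving any tensor factor past an $\homology_0$ factor, the order of the factors does not affect the image. So the $\homology_0$ factors may be moved to the front. Next, group all of them via the multiplication maps and the inclusion $\prod_j G^p_{A_j}(v_j)\subseteq G^p_{\bigoplus_j A_j}(\sum_j v_j)$. The map then factors through a domain of one of two shapes:
\[\homology_0(G^p_{B}(w);\St(B))\otimes \homology_2(G^p_{A}(v);\St(A)) \quad\text{with } \rk A\le 6p+6,\]
or
\[\homology_0(G^p_{B}(w);\St(B))\otimes \homology_1(G^p_{A}(v);\St(A))\otimes\homology_1(G^p_{A'}(v');\St(A'))\quad\text{with } \rk A,\rk A'\le 3p+3.\]

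\textbf{Rank count.} In the first shape, $\rk B\ge n-(6p+6)\ge p+2$. In the second shape, $\rk B\ge n-2(3p+3)\ge p+2$. In either case \autoref{coinv K=0} shows the $\homology_0$ factor vanishes when $w\not\equiv 0 \bmod pB$. When $w\equiv 0$, the factor is $\homology_0(\SL(B);\St(B))$, which vanishes by Lee--Szczarba since $\rk B\ge 2$. So every such summand is zero.

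\textbf{Degenerate cases and the main obstacle.} The grouping step needs at least one $\homology_0$ factor. The potential obstacle is a summand with no $\homology_0$ factor at all. For the first shape this would force $k=0$, but $k\ge1$ in \autoref{surj:B}. For the second shape it would mean $n=\rk A+\rk A'\le 6p+6<n$. Both are impossible, so $B$ is always nonzero. Hence the domain vanishes, and since the map is surjective, $\homology_2(G_n^p;\St_n(\Z))\cong 0$.

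The step needing the most care is justifying that the grouping map is compatible with the multiplication. This uses associativity of apartment concatenation and the inclusions of stabilizers, exactly as in the passage from \eqref{onei} to \eqref{vanishingdomeq}.
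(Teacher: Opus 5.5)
Your proposal is correct and follows the paper's proof essentially step for step: surjectivity from \autoref{surj:B}, reordering the factors via \autoref{lem:img}, collapsing the $\homology_0$ factors onto a single nonzero summand $B$, and killing the resulting $\homology_0$ factor because $\rk B\geq p+2$. Your rank bound $\rk B\geq n-2(3p+3)\geq p+2$ in the two-$\homology_1$ case, and your separate treatment of $w\equiv 0 \bmod pB$ via Lee--Szczarba, are in fact slightly more careful than the paper's write-up, which writes $n-(3p+3)$ there and cites only \autoref{coinv K=0}.
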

    \begin{proof}
        Fix $n\geq 7p+8$. By \autoref{surj:B}, there is a surjective map \begin{equation}\label{map:sB}
\begin{aligned}
\bigoplus_{k=1}^{n-1}
\bigoplus_{[(A_0,\ldots,A_k)]\in G_n^p\backslash S_k}
\Bigg(
\bigoplus_{\substack{0\le r\le k\\ \rk(A_r)\le 6p+6}}
\homology_0(G_{A_0}^p(v_0);\St(&A_0))
\otimes\cdots\otimes\homology_2(G_{A_r}^p(v_r);\St(A_r))
\otimes\cdots\otimes\homology_0(G_{A_k}^p(v_k);\St(A_k))\\
&\oplus\\\bigoplus_{\substack{0\le r<s\le k\\ \rk(A_r)\leq 3p+3\\ \rk(A_s)\le 3p+3}}
\homology_0(G_{A_0}^p(v_0);\St(A_0))\otimes\cdots\otimes
\homology_1(G_{A_r}^p(v_r);\St&(A_r)\otimes\cdots\otimes
\homology_1(G_{A_s}^p(v_s);\St(A_s))\otimes\cdots\otimes\homology_0(G_{A_k}^p(v_k);\St(A_k))
\Bigg)\\
&\hspace{-1cm}\longrightarrow \homology_2(G_n^p;\St_n(\mathbb Z)).
\end{aligned}
\end{equation}is surjective for $n\geq 6p+7$. By \autoref{lem:img}, this map has the same image as the following map:

\begin{equation}\label{twoi}
\begin{aligned}
\bigoplus_{k=1}^{n-1}\Biggl(&
\bigoplus_{\substack{[(A_0,\dots,A_k)]\in G_n^p\backslash S_k\\
\rk(A_k)\leq 6p+6}}
\homology_0(G_{A_0}^p(v_0);\St(A_0))\otimes\dots\otimes
\homology_2(G_{A_k}^p(v_k);\St(A_k)) \\
\oplus\;&
\bigoplus_{\substack{[(A_0,\dots,A_k)]\in G_n^p\backslash S_k\\
\rk(A_{k-1})\leq 3p+3\\\rk(A_k)\leq 3p+3}}
\homology_0(G_{A_0}^p(v_0);\St(A_0))\otimes\dots\otimes
\homology_1(G_{A_{k-1}}^p(v_{k-1});\St(A_{k-1}))\otimes
\homology_1(G_{A_k}^p(v_k);\St(A_k))
\Biggr)\\&\hspace{5cm}\longrightarrow \homology_2(G_n^p;\St_n(\Z)).
\end{aligned}
\end{equation}Note that the second summand necessarily contains at least one $\homology_0$ tensor factor, since $n\geq 7p+8$. 

\noindent By grouping the $\homology_0$ tensor factors together, we get that the map \eqref{twoi} factors through the following map:
\begin{equation}\label{zerodomain}
\begin{aligned}
&\bigoplus_{\substack{[(A_0,A_1)]\in G_n^p\backslash S_1\\
\rk(A_1)\leq 6p+6}}
\homology_0(G_{A_0}^p(v_0);\St(A_0))\otimes
\homology_2(G_{A_1}^p(v_1);\St(A_1))\\\oplus\;&
\bigoplus_{\substack{[(A_0,A_1,A_2)]\in G_n^p\backslash S_2\\
\rk(A_{1})\leq 3p+3\\\rk(A_2)\leq 3p+3}}
\homology_0(G_{A_0}^p(v_0);\St(A_0))\otimes
\homology_1(G_{A_{1}}^p(v_{1});\St(A_{1}))\otimes
\homology_1(G_{A_2}^p(v_2);\St(A_2))\\&\hspace{5cm}
\longrightarrow \homology_2(G_n^p;\St_n(\Z)).
\end{aligned}
\end{equation} 
 If $(A_0,A_1) \in S_1$ and $\rk (A_1) \leq 6p+6$, then \[\rk(A_0) \geq n-(6p+6) \geq (7p+8)-(6p+6)=p+2.\]
If $(A_0,A_1,A_2) \in S_2$ and $\rk (A_1),\rk(A_2) \leq 3p+3$, then \[\rk(A_0) \geq n-(3p+3) \geq (7p+8)-(3p+3)=4p+5>p+2.\]
 By \autoref{coinv K=0}, $\homology_0(G_{A_0}^p(v_0);\St(A_0)) \cong 0$ if $\rk (A_0) \geq p+2$. Thus, the domain of the map from \eqref{zerodomain} vanishes. Since the map from \eqref{zerodomain} is surjective and zero, its codomain vanishes. 
    \end{proof}
    Lastly we prove \autoref{codim2}, which states that \[\homology_2(\Gamma_{0,n}(p);\St_n(\Z))\cong 0\]for $n\geq 7p+8$.
    \begin{proof}[Proof of \autoref{codim2}]
  Since $G_n^p$ is a finite index subgroup of $\Gamma_{0,n}(p)$, we obtain a surjection
\[\homology_2(G_n^p;\St_n(\Z))\longrightarrow\homology_2(\Gamma_{0,n}(p);\St_n(\Z)).\]As the domain is zero by \autoref{H2=0} for $n\geq 7p+8$, it follows that $\homology_2(\Gamma_{0,n}(p);\St_n(\Z))$ is zero. We thus conclude by Borel--Serre duality that \[\homology^{\binom{n}{2}-2}(\Gamma_{0,n}(p);\Q)\cong 0\]for $n\geq 7p+8$.
    \end{proof}
\bibliographystyle{alpha}
\bibliography{References}
\end{document}